\newcommand{\Rr}{{\mathbb{R}}}
\newcommand{\Nn}{{\mathbb{N}}}
\newcommand{\D}[1]{\mbox{\rm #1}}
\newcommand{\dd}{\D{d}}
\newtheorem{theorem}{Theorem}[section]
\newtheorem{rem}[theorem]{\sc Remark}
    \newenvironment{remark}{\begin{rem} \begin{rm}}{\end{rm} \qed\end{rem}}
\newtheorem{lemma}[theorem]{Lemma}
 \newtheorem{proposition}[theorem]{Proposition}
\newtheorem{definition}[theorem]{Definition}
\theoremstyle{definition}
\numberwithin{equation}{section}
\begin{document}
\thispagestyle{empty}
\setcounter{page}{1}


\begin{center}
{\large\bf \uppercase{Well-posedness for the Cauchy problem for a
fractional porous medium equation with variable density in one
space dimension}}

\vskip.20in

Fabio Punzo \\[2mm]
{\footnotesize
Dipartimento di Matematica ``G. Castelnuovo", \\
Universit\`{a} di Roma La Sapienza, \\ 
Piazzale Aldo Moro 5, 00185 Roma }\\ [3mm]
Gabriele Terrone\footnote{
Gabriele Terrone was supported by the UTAustin-Portugal partnership through the FCT post-doctoral fellowship
SFRH/BPD/40338/2007, CAMGSD-LARSys through FCT Program POCTI -
FEDER and by grants PTDC/MAT/114397/2009,
UTAustin/MAT/0057/2008, and UTA-CMU/MAT/0007/2009.
} \\[2mm]
{\footnotesize
Center for Mathematical Analysis, Geometry, and Dynamical Systems, \\ 
Departamento de Matem\'{a}tica, \\
Instituto Superior T\'{e}cnico, 1049-001 Lisboa, Portugal}
\end{center}

{\footnotesize
\noindent
{\bf Abstract.}
We study existence and uniqueness of bounded solutions to a fractional nonlinear porous medium equation with a variable density, in one space dimension.
\\[3pt]
{\bf Keywords.} Fractional Laplacian, Porous medium equation, Cauchy problem, Harmonic Extension, Variable density.
\\[3pt]
{\small\bf AMS subject classification:} 35A01, 35A02, 35E15, 35K55, 35R11.
}

\vskip.2in


\section{Introduction}
In this paper we study existence and uniqueness of bounded solutions to {\it nonlocal} nonlinear initial-value problems of the following type:
\begin{equation}
\label{06111}
    \begin{cases}
    \displaystyle\rho\, \partial_t u + \left(-\frac{\partial^2}{\partial x^2}\right)^{\frac{1}{2}}\left[ u^m\right] = 0  & x\in \Rr, \, t>0\\
     u=u_0 & x\in \Rr, \, t=0;
    \end{cases}
\end{equation}
here $\rho=\rho(x)$, usually referred to as a {\it variable density}, is a positive function only depending on the spatial variable $x,
 \left(-\frac{\partial^2}{\partial x^2}\right)^{\frac{1}{2}}$ denotes
the one-dimensional fractional Laplacian of order $1/2$, $m\geq 1$ and $ u_0$ is a nonnegative and bounded function.

The interest in the study of nonlocal problems has grown significatively in the last years and the analysis of nonlinear equations involving fractional powers of the Laplacian has become an area of intense research. Such problems in fact arise in many physical situations (see, $e.g.$ \cite{AC}, \cite{Ja} , \cite{JKO}) in particular in the analysis of long-range or anomalous diffusions. From a probabilistic point of view, the fractional Laplacian is the infinitesimal generator of a L\'evy process (see \cite{Bert}). Then, losely speaking, uniqueness for problem \eqref{06111} with $m=1$ corresponds to the fact that the L\'evy process associated to the operator $\frac 1{\rho} \left(-\frac{\partial^2}{\partial x^2}\right)^{\frac{1}{2}}$, starting from any point in $\Rr$, does not attain {\it infinity}.

\medskip

If the nonlocal operator $\left(-\frac{\partial^2}{\partial x^2}\right)^{\frac{1}{2}}$ in \eqref{06111} is replaced by the classical second order partial derivatives $-\frac{\partial^2}{\partial x^2}$ we get
\begin{equation}\label{ea1}
  \begin{cases}
    \displaystyle\rho\, \partial_t u - \frac{\partial^2 u^m}{\partial x^2}  = 0  & x\in \Rr, \, t>0\\
     u=u_0 & x\in \Rr, \, t=0,
    \end{cases}
\end{equation}
which is the initial value problem for the porous medium equation with variable density in one space dimension.
Problem \eqref{ea1} and its counterpart in $\Rr^N$, that is,
\begin{equation}
\label{ea2}
    \begin{cases}
    \displaystyle\rho\, \partial_t u + \Delta u^m  = 0  & x\in \Rr^N, \, t>0\\
     u=u_0 & x\in \Rr^N, \, t=0,
    \end{cases}
\end{equation}
occurs in various situations of physical interest (see, $e.g.$, \cite{KR}) and have been largely investigated 
in the literature during the last two decades (see \cite{Eid90}, \cite{EK},  \cite{GMP}, \cite{GHP}, \cite{KRV}, \cite{KKT}, \cite{P1}, \cite{RV1}-\cite{RV2}). More recently, similar problems on Riemannian manifolds have been  addressed in \cite{P2}-\cite{P4}.

It is well-known that the asymptotic behavior of the varying density $\rho$ may influence uniqueness of solutions for problem \eqref{ea2}. Briefly, for $N=1$ or $N=2$ problem \eqref{ea2} is well-posed in the class of bounded solutions not satisfying any additional conditions at infinity (see \cite{GHP}), provided $\rho\in L^\infty(\Rr^N)$. The situation is different if $N\geq 3$. Indeed, in this case uniqueness prevails when the density $\rho$ goes to zero slowly as $|x|\to\infty$. If instead $\rho$ goes to zero fast as $|x|$ diverges, then nonuniqueness of bounded solutions occurs (see \cite{EK}, \cite{GMP}, \cite{KKT},  \cite{P1}-\cite{P4}, \cite{RV1}-\cite{RV2}).

Concerning the regularity assumed for initial data and
solutions we point out that in \cite{KRV}, \cite{RV1}-\cite{RV2}
 {\it weak energy solutions} are dealt with, and $u_0$ is supposed to belong to 
 $L^1_{\rho}(\Rr^N)$, the space of measurable nonnegative functions $f$ satisfying
$\int_{\Rr^N} f\,\rho \dd x <\infty$. In \cite{EK}, \cite{GMP} \cite{KKT} and
\cite{P1}, instead, bounded initial data and so-called {\it very
weak solutions} are treated. One of the main issues in dealing
with very weak solutions lies in the fact that, in general,
these solutions do not have finite energy in the whole $\Rr^N$.

\medskip
Very recently,  in \cite{V2},  the {\it nonlocal} nonlinear
initial-value problem
\begin{equation}
\label{ea3}
    \begin{cases}
    \partial_t u + (- \Delta)^{\frac{\sigma}{2}}\left[ u^m\right] = 0  & x\in \Rr^N, \, t>0\\
     u=u_0 & x\in \Rr^N, \, t=0
    \end{cases}
\end{equation}
$(0<\sigma <2)$ has been studied. The particular case $\sigma=1$
has been considered in \cite{V1}; more precisely, existence,
uniqueness and properties of weak solutions to \eqref{ea3} have
been established assuming $u_0\in L^1(\Rr^N)$. Furthermore, problem 
\[
    \begin{cases}
    \partial_t u + \left(-\frac{\partial^2}{\partial x^2}\right)^{\frac{1}{2}}\left[ \log(1+u)\right] = 0  & x\in \Rr, \, t>0\\
     u=u_0 & x\in \Rr, \, t=0
    \end{cases}
\]
has been addressed in \cite{V4}. 
\medskip

The motivation in studying problem \eqref{06111} is twofold. In fact, on one hand, when $\rho\equiv1$, \eqref{06111} becomes problem \eqref{ea3} with $\sigma=N=1$, thus problem \eqref{06111} can be regarded as a generalization of problem \eqref{ea3} to the case in which a variable density $\rho$ is taken into account. On the other hand, problem 
\eqref{06111}  can also be considered as nonlocal version of problem \eqref{ea1}.  
\medskip

The aim of this paper is to prove well-posedness of problem \eqref{06111} in the class of bounded solutions not satisfying any additional conditions at infinity. We shall consider bounded initial data $u_0$ and, consequently, very weak bounded solutions (see Definition \ref{06114}). Let us mention that our results differ from those in \cite{V1} where $\rho$ is assumed to be identically equal to $1$. Moreover, also in the case $\rho\equiv 1$ considered in \cite{V1} uniqueness of very weak solutions is not proved, while uniqueness of weak energy solutions is shown (see \cite[Lemma 4.1]{V1}). Very weak solutions to problem \eqref{ea3} in the space $C\big([0,\infty); L^1_{\varphi}(\Rr^N) \big)$ ($\varphi$ being a suitable weight decaying at infinity) have been considered in \cite{BonfV} where uniqueness of such type of solutions is also shown provided $0<m<1$.

To the best of our knowledge, the uniqueness result presented in this paper is new also in the linear case $m=1$, that is for problem
\begin{equation}\label{ea5}
 \begin{cases}
    \displaystyle\rho\, \partial_t u + \left(-\frac{\partial^2}{\partial x^2}\right)^{\frac{1}{2}}\left[ u\right] = 0  & x\in \Rr, \, t>0\\
     u=u_0 & x\in \Rr, \, t=0.
    \end{cases}
\end{equation}
Some results for nonlocal linear parabolic equation with a variable density have been established in \cite{Chas}, but not for problem \eqref{ea5}.

\medskip

One can expect that if $N\geq 2$ the nonlocal problem
\[
 \begin{cases}
   \rho \,\partial_t u + (- \Delta)^{\frac{1}{2}}\left[ u^m\right] = 0  & x\in \Rr^N, \, t>0\\
     u=u_0 & x\in \Rr^N, \, t=0
    \end{cases}
\]
may exhibit uniqueness or nonuniqueness of solutions depending on the behavior at infinity of the density $\rho$. This would reflect in the nonlocal case the the situation described above for problem \eqref{ea2}.
However, some technical difficulties prevent us from adapting the methods used in this paper  to investigate uniqueness of very weak solutions in any space dimension; see Remark \ref{Rem1a}. We postpone to a forthcoming paper the proof of existence and uniqueness of solutions to problem
\begin{equation}
\label{ea4}
    \begin{cases}
   \rho\, \partial_t u + (- \Delta)^{\frac{\sigma}{2}}\left[ u^m\right] = 0  & x\in \Rr^N, \, t>0\\
     u=u_0 & x\in \Rr^N, \, t=0
    \end{cases}
\end{equation}
for any $0<\sigma <2$ and any $N\ge 1$. In this case, according with \cite{V1}-\cite{V2}, we shall consider weak energy solutions and completely different arguments.

\medskip
The paper is organized as follows.
In Section \ref{sec:aa} we introduce the setting of the problem and give the precise definition of solutions we are dealing with. We also recall some basic facts about fractional Laplacian, mainly its realization trough the harmonic extension, which permits to look at problem \eqref{06111} in a ``local way'' as a quasi-stationary problem with dynamical boundary conditions. In Section \ref{sec:bb} we prove existence of solutions to problem \eqref{06111}; see Theorem \ref{06116}. Even if the general strategy of the proofs in this Section goes along the same lines as in \cite{V1}-\cite{V2}, there are some differences here that will be expedient as we will point out in Remark \ref{Rem2a}. We should remark that all results in Section \ref{sec:bb} are valid in general for any $N\geq 1$. In subsequent Section \ref{sec:cc} we establish uniqueness of solutions; see Theorem \ref{tuni}.
The proof exploits the above mentioned realization of the fractional Laplacian combined with properties of a suitable family of test functions we construct in Lemma \ref{l1u}. Here the restriction $N=1$ will be crucial as explained in Remarks \ref{Rem1a} and \ref{Rem3a}.

\medskip
To conclude, let us mention that our techniques also applies to more general
fractional nonlinear diffusion equations with variable density, such as
\[
\rho\, \partial_t u + \left(-\frac{\partial^2}{\partial
x^2}\right)^{\frac{1}{2}}\left[ G(u)\right] = 0\quad x\in \Rr,
t>0,
\]
provided
$G$ satisfies suitable conditions as in \cite{EK}, \cite{P1}. 
Moreover, changing sign
bounded solutions could also be considered. However, for sake of simplicity, we limit ourselves
to the case of nonnegative solutions and $G(u)=u^m$ $(m\geq 1)$.


\section{Problem setting and assumptions}
\label{sec:aa} We recall that the nonlocal operator
$\left(-\frac{\partial^2}{\partial x^2} \right)^{\frac 1 2}$ is
defined for any function $\varphi$ belonging to the Schwartz class
by
\[
\left(-\frac{\partial^2}{\partial x^2} \right)^{\frac 1 2} \varphi=\frac 1{\pi}\; \mbox{P.V.} \int_{\Rr}\frac{\varphi(x)-\varphi(y)}{|x-y|^2}\dd y\,.
\]
As is well-known, $\left(\frac{\partial^2}{\partial x^2}
\right)^{\frac 1 2}$ can be also defined in many other ways; we refer
the reader to \cite{CS1}, \cite{CafS}, \cite{DPVal}, for a
comprehensive account on the subject. In particular, if $\varphi$ is a smooth and
bounded function defined in $\Rr$, we can consider its harmonic
extension $v$ to the upper half-space
\[
\Omega:= \Rr^{2}_+ = \{(x,y): \, x\in \Rr, \, y>0 \}.
\]
Hence (see \cite{CS1}, \cite{CafS},  \cite{V1}, \cite{V4}),
\[-\frac{\partial v(x,0)}{\partial y}=\left(-\frac{\partial ^2}{\partial x^2} \right)^{\frac 1 2} \varphi(x)\quad \textrm{for all}\;\; x\in \Rr.\]


\smallskip

In the sequel, we always make the following assumption:
\begin{equation}
\label{A0} \tag{{\bf A}$_0$}
\begin{cases}
\text{(i)} &\rho\in C(\Rr), \,\rho>0 \text{ in } \Rr;\\
\text{(ii)}& u_0\,\, \text{is nonnegative, continuous and bounded
in }\Rr.
\end{cases}
\end{equation}

Following \cite{V1} and \cite{V2}, using the harmonic extension,
we shall rewrite the problem \eqref{06111} in terms of local
differential operators. More precisely, solving problem
\eqref{06111} is equivalent to solve the quasi-stationary problem
with a dynamical boundary conditions:
\begin{equation}
\label{06112}
    \begin{cases}
    \Delta w = 0 & (x,y)\in \Omega, \, t >0\\
    \displaystyle \frac{\partial w}{\partial y}= \rho\, \frac{\partial\left[ w^\frac{1}{m}\right]}{\partial t} & x\in \Gamma,\, t>0\\
     w=u_0^m & x\in \Gamma, \, t=0;
    \end{cases}
\end{equation}
here $\Gamma:=\overline \Omega \cap \{y=0\}\equiv \Rr$.

Now, take any open bounded subset $D\subset \overline \Omega$ with
$\partial D \cap \Omega$ smooth and $\partial D\cap \Gamma \neq
\emptyset$. Then the exterior normal to $\partial D$ exists almost
everywhere; we denote such vector by $\vec\nu$. Let $T>0, \psi\in
C^{2,1}_{x,t}(\overline D\times [0,T])$, $\psi\ge 0$, $\psi =0$ in
$(\partial D\backslash \Gamma)\times [0,T]$, $\psi \not\equiv 0$
in $(\partial D\cap \Gamma)\times [0,T]$.
Formally, multiplying the differential equation in \eqref{06112}
by $\psi$, integrating by parts and considering the initial and
the dynamical boundary condition  we get:

\[
0   = \int_0^T \int_{D} \psi \,\Delta w \,\dd x\, \dd y\, \dd t \]
  \[ = - \int_0^T \int_{D} \langle\nabla\psi, \nabla w \rangle\,\dd x\, \dd y\, \dd t
           + \int_0^T \int_{\partial D} \psi\,   \frac{\partial w}{\partial\vec{\nu}}\,\dd x\, \dd y\, \dd t\]
 \[= - \int_0^T \int_{D}\langle\nabla\psi, \nabla w \rangle \, \dd x\, \dd y\, \dd t
- \int_0^T \int_{\partial D\cap \Gamma} \psi \,   \frac{\partial w}{\partial y}\,
{\dd x\, 
\dd t
}
\]
\[   = \int_0^T \int_{D} w \,\Delta \psi  \,\dd x\, \dd y\, \dd t
            - \int_0^T \int_{\partial D\cap \Omega} w \frac{\partial \psi}{\partial \vec\nu}\, \dd S\,  \dd t
            + \int_0^T \int_{\partial D\cap \Gamma}  w \frac{\partial \psi}{\partial y}\, \dd x\, \dd t \]
\[      - \int_0^T \int_{\partial D\cap \Gamma}  \rho\,\psi\,  \partial_t u \,\dd x
\, \dd t \]
\[ =  \int_0^T \int_{D} w\, \Delta \psi  \,\dd x\, \dd y\, \dd t
        -   \int_0^T \int_{\partial D\cap \Omega} w \frac{\partial \psi}{\partial \vec\nu}\,\dd S \, \dd t
        + \int_0^T \int_{\partial D\cap \Gamma}  w \frac{\partial \psi}{\partial y}\, \dd x\,  \dd t\]
        \[ + \int_0^T \int_{\partial D\cap \Gamma} \rho\, u\, \partial_t \psi \,\dd x\, \dd t\]\[     - \int_{\partial D\cap \Gamma} \rho(x) \Bigl[ u(x,T)\psi(x,0,T) - u_0(x)\psi(x,0,0)\Bigr] \,\dd x.\]
    
Given a function $f\in W^{1,2}_{loc}(\overline \Omega)$ we denote by $f |_{\Gamma}$ its trace on $\Gamma$, which is in $L^2_{loc}(\Gamma)$.
In view of previous equalities we give next

\begin{definition}
\label{06114}
A \emph{ solution} to problem \eqref{06112} is a
pair of functions $(u,w)$ such that
\begin{itemize}
\item{}   $u \in L^{\infty} (\Gamma\times
(0,\infty))$, $u\ge 0$;
\item{} $w\in L^{\infty} (\Omega\times
(0,\infty))$, $w\ge 0$,  $\nabla w\in L^2_{loc}\big(\overline\Omega\times(0,\infty)
\big)$;
\item{} $w|_{\Gamma\times (0,\infty)}=u^m$;
\item{} for any $T>0, D$ and $\psi$ as in \eqref{291101} there holds
 \begin{multline}
    \label{06115}
    \int_0^T \int_{D} w \,\Delta \psi  \,\dd x\, \dd y\, \dd
    t -\int_0^T \int_{\partial D\cap \Omega} w \frac{\partial \psi}{\partial \vec\nu}\, \dd S \, \dd t  \\
    + \int_0^T \int_{\partial D\cap \Gamma}  u^m \frac{\partial \psi}{\partial y}\, \dd x\, \dd t
      + \int_0^T \int_{\partial D\cap \Gamma} \rho\, u\, \partial_t \varphi \,\dd x\, \dd
      t\\
     -\int_{ \Gamma} \rho(x) \Bigl[ u(x,T)\varphi(x,0,T) - u_0(x)\varphi(x,0,0)\Bigr] \,\dd x=0.
     \end{multline}
\end{itemize}
\end{definition}

Here and hereafter we say that $f\in L^2_{loc}\big(\overline\Omega\times(0,\infty)$ when $f\in L^2\big(\Omega'\times (0,\infty) \big)$ for every open subset $\Omega'\subset\subset
\overline\Omega$.
Note that, since $\nabla w\in L^2_{loc}\big(\overline\Omega\times(0,\infty)
\big)$, the equality $w|_{\Gamma\times (0,\infty)}=u^m$ makes sense.

\section{Existence of solutions}
\label{sec:bb}

In  this Section we prove existence and some properties of solutions to problem \eqref{06112}.
\begin{theorem}
\label{06116}
Let assumption \eqref{A0} be satisfied. Then there exists a solution $(u,w)$ to problem \eqref{06112}.
\end{theorem}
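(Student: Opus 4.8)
The plan is to obtain a solution in the sense of Definition~\ref{06114} by an approximation--compactness procedure, broadly following \cite{V1}--\cite{V2} but adapting it to bounded (rather than $L^1$) data and to the variable density $\rho$.

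First I would set up a family of non-degenerate approximating problems. Fix $\varepsilon>0$ and choose smooth data $u_{0,\varepsilon}$ with $\varepsilon\le u_{0,\varepsilon}\le\|u_0\|_\infty$ converging to $u_0$ as $\varepsilon\downarrow0$; then solve \eqref{06112} on truncated domains $\Omega_n=(-n,n)\times(0,n)$ with suitable Dirichlet data for $w$ (compatible with the lower bound $\varepsilon$) on the artificial part of $\partial\Omega_n$. On each such domain $\rho$ is continuous, hence bounded above and away from zero, and since the solution stays bounded away from $0$ the maps $s\mapsto s^m$ and $s\mapsto s^{1/m}$ are smooth and non-degenerate in the relevant range; existence and regularity of a (classical) approximate solution $(u_n,w_n)$ then follow from standard quasilinear parabolic theory for problems with a dynamical boundary condition.

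Next I would derive two estimates, uniform in $n$ and $\varepsilon$. The $L^\infty$ bound $\varepsilon\le u_n\le\|u_0\|_\infty$ follows by comparison with constant solutions, whose harmonic extensions are constant and which therefore solve \eqref{06112}. For the energy, testing the dynamical boundary condition with $w_n=u_n^m$ and integrating by parts via the harmonicity of $w_n$ gives a relation of the form $\frac{\dd}{\dd t}\int_{\Gamma}\rho\,u_n^{m+1}\,\dd x=-(m+1)\int_{\Omega}|\nabla w_n|^2\,\dd x\,\dd y$. Since $u_0$ need not be integrable against $\rho$ over all of $\Gamma$, this global identity may be infinite, so I would instead test with $\zeta^2 w_n$ for a spatial cutoff $\zeta$ and absorb the commutator terms, obtaining the \emph{local} energy bound $\int_0^T\!\int_{\Omega'}|\nabla w_n|^2\,\dd x\,\dd y\,\dd t\le C(\Omega',T)$ for every $\Omega'\subset\subset\overline\Omega$. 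This is precisely the regularity $\nabla w\in L^2_{loc}$ demanded by Definition~\ref{06114}.

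Finally, from the $L^\infty$ bound I extract weak-$*$ limits $u_n\rightharpoonup u$, $w_n\rightharpoonup w$, and from the local energy bound $w_n\rightharpoonup w$ in $H^1_{loc}$. The delicate point is the nonlinear trace term $u_n^m$ in \eqref{06115}, for which weak convergence does not suffice: I would obtain \emph{strong} convergence of the traces $u_n^m=w_n|_{\Gamma}$ by an Aubin--Lions argument, using the local energy bound for spatial ($H^{1/2}_{loc}$) compactness and the equation $\rho\,\partial_t u_n=\partial_y w_n|_{\Gamma}$ for the time-derivative bound. This yields $u_n^m\to u^m$ a.e. on $\Gamma\times(0,T)$, after which every term of \eqref{06115} passes to the limit as $n\to\infty$; a final monotone limit $\varepsilon\downarrow0$ removes the artificial lower bound and produces a solution for the original bounded $u_0$. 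I expect the main obstacle to be exactly this passage to the limit in the nonlinearity under purely local estimates: the absence of global finite energy (the hallmark of very weak solutions) forces all compactness to be localized, and controlling the trace convergence uniformly while the nonlinearity $s\mapsto s^m$ degenerates at $s=0$ as $\varepsilon\downarrow0$ is where a careful monotonicity or equi-integrability argument will be needed.
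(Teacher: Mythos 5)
Your overall architecture (approximate on truncated domains, uniform $L^\infty$ bound by comparison with constants, local energy estimate via the cutoff test function, then pass to the limit) matches the paper's, and your local energy estimate is essentially the paper's Proposition~\ref{14115}-\emph{vii}. But the two devices you rely on at the start and at the end differ from the paper's, and the first one hides a genuine gap. You dispose of the existence of the approximate solutions $(u_n,w_n)$ by appealing to ``standard quasilinear parabolic theory for problems with a dynamical boundary condition.'' The truncated problem is not parabolic in the bulk: $w_n$ is \emph{harmonic} in $\Omega_n$, and the time evolution lives only on the boundary piece $\Gamma_n$. Classical quasilinear parabolic theory (Ladyzhenskaya--Solonnikov--Ural'tseva type) therefore does not apply off the shelf, and the $\varepsilon$-nondegeneracy removes the degeneracy of $s\mapsto s^{1/m}$ but does not by itself produce existence. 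What is needed is either the theory of elliptic equations with quasilinear dynamical boundary conditions (in the spirit of Escher's work), or, equivalently, a construction of the evolution generated by the Dirichlet-to-Neumann operator. The latter is exactly what the paper does, and it is the bulk of its proof: the resolvent problems \eqref{061111} are solved by minimizing a coercive convex functional (Lemma~\ref{061110}), the implicit time discretization \eqref{06119} is shown to fit the Crandall--Liggett theorem \cite{CL}, and this yields Proposition~\ref{06117}. As written, your proposal assumes precisely this step.

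Second, your passage to the limit via Aubin--Lions is workable but both harder than necessary and incompletely stated: the spatial bound you have is on $u_n^m=w_n|_{\Gamma}$ (in $H^{1/2}_{loc}$), while the time-derivative bound coming from $\rho\,\partial_t u_n=\partial_y w_n|_{\Gamma}$ controls $u_n$, not $u_n^m$, so the classical lemma applies to neither sequence alone; one must run a time-translate argument using the elementary inequality $(a^m-b^m)^2\le m\|u_0\|_\infty^{m-1}(a^m-b^m)(a-b)$ together with the $H^{1/2}$--$H^{-1/2}$ duality --- this is the ``careful monotonicity argument'' you anticipate, and it does go through. The paper sidesteps all of this: the $L^1_\rho$-contraction \eqref{14111} gives comparison, hence monotonicity of $w_R$ in $R$ (Proposition~\ref{14115}-\emph{iii}), so $u_R\uparrow u$ pointwise and the nonlinearity passes by monotone convergence with no compactness argument at all. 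Note that your scheme has the same monotonicity available: for $n<n'$, comparison with the constant solution $\varepsilon$ gives $w_{n'}\ge\varepsilon^m$ on $\Sigma_n$, hence $w_{n'}\ge w_n$ in $\Omega_n$; so once the existence gap above is filled you could drop Aubin--Lions entirely and take monotone limits first in $n$ and then in $\varepsilon$, exactly as the paper does in $R$.
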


The proof of Theorem \ref{06116} is divided in several steps. Let
us fix some notations. For any $R>0$ we denote by $
B_R^{N}=B^N(0,R)$ the Euclidean open ball in $\Rr^N$; we  set
\begin{align*}
\Omega_R &:= \Omega\cap B_R^{2}\,,\\
\Gamma_R &:=\partial \Omega_R   \cap \{y=0\} \equiv B_R^1\,,\\
\Sigma_R &:= \partial\Omega_R \cap \{y>0\}.
\end{align*}
If $f\in W^{1,2}(\Omega_R)$, we denote by $f |_{\Gamma_R}, f|_{\Sigma_R}$ the trace of $f$ on $\Gamma_R$ and $\Sigma_R$, respectively.

For any fixed $\epsilon>0$, $R>0$  and $g\in L^\infty(\Gamma_R)$ we consider
the following auxiliary elliptic problem in $\Omega_R$:
\begin{equation}
\label{061111}
    \begin{cases}
    \Delta v_R = 0 & (x,y)\in \Omega_R\\
    \displaystyle-\epsilon\frac{\partial v_R}{\partial y} + \rho\, (v_R)^\frac{1}{m}= \rho\, g & x\in \Gamma_R\\
    v_R=0 & (x,y)\in \Sigma_R.
    \end{cases}
\end{equation}

\begin{definition}
\label{231101}
A \emph{solution} to problem \eqref{061111} is a pair of functions
$(z_R, v_R)$ such that
\begin{itemize}
\item{} $z_R \in L^\infty(\Gamma_R)$;
\item{} $v_R \in W^{1,2}(\Omega_R)\cap L^\infty(\Omega_R)$;
\item{} $v_R\big{|}_{\Gamma_R}=z_R^m, \; v_R\big{|}_{\Sigma_R}=0;$
\item{} for any $\varphi\in C^1(\overline{\Omega_R}), \varphi=0$ on $\Sigma_R$ there holds
    \begin{equation}
    \label{061150}
    \int_{\Omega_R} \langle \nabla v_R, \nabla \varphi \rangle \,\dd x\, \dd y
    + \frac{1}{\epsilon}\int_{\Gamma_R} \rho \left(z_R- g \right) \varphi \,\dd x=0.
     \end{equation}
\end{itemize}
\end{definition}

In the following Lemma we establish existence, uniqueness and some
properties of solutions to problem \eqref{061111}.
\begin{lemma}
\label{061110}
Let $R>0$ and $\epsilon>0$ be fixed. For any $g\in L^{\infty}(\Gamma)$, there exists a unique solution $(z_R,v_R)$ to the problem \eqref{061111}.
Furthermore the following properties hold: 
\begin{itemize}
\item[i.]
\underline{Contractivity:}
the mapping
\[
g\mapsto \frac{1}{\rho}z_R
\]
is a contraction in the norm of $L^{1}(\Gamma_R)$, namely, if $v_R$ and $\tilde{v}_R$ are solutions of \eqref{061111} corresponding to $g$ and $\tilde{g}$ respectively, then
\begin{equation}
\label{14111}
\int_{\Gamma_R} \rho \left( z_R- \tilde{z}_R\right)_{+} \, \dd x
\leq \int_{\Gamma_R} \rho \left[ g - \tilde{g} \right]_{+} \, \dd x;
\end{equation}
in particular, if $g\geq 0$ then $z_R\geq 0$ in $\Omega_{R}$;

\item[ii.] \underline{Uniform boundedness:}
for any $\epsilon>0$ and any $R>0$
\begin{equation}
\label{14113}
0\leq v_R \leq  \|g\|_{L^\infty(\Gamma)}^{m} \quad\text{in }\Omega_R,
\end{equation}
provided $g\ge 0$;

\item[iii.] \underline{Monotonicity:}
if $R'>R$ then $v_{R'}\geq v_{R}$ in $\Omega_{R}$, provided $g \geq 0$.
\end{itemize}
\end{lemma}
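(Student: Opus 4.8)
The plan is to eliminate $z_R$ through the trace relation $z_R = (v_R|_{\Gamma_R})^{1/m}$, so that the genuine unknown is $v_R$ in the space $W := \{v \in W^{1,2}(\Omega_R) : v|_{\Sigma_R}=0\}$, and then to recognize the weak formulation \eqref{061150} as the Euler--Lagrange equation of a strictly convex functional. Extending the nonlinearity to $\beta(s) := |s|^{1/m-1}s$ (continuous and strictly increasing on all of $\Rr$) and setting $B(s) := \int_0^s \beta(\tau)\,\dd\tau = \tfrac{m}{m+1}|s|^{(m+1)/m}$, I would minimize
\[
J(v) := \tfrac12 \int_{\Omega_R} |\nabla v|^2 \,\dd x\, \dd y + \frac{1}{\epsilon}\int_{\Gamma_R} \rho\,\big[ B(v) - g\,v\big]\,\dd x , \qquad v \in W .
\]
Since $v$ vanishes on $\Sigma_R$, the Poincar\'e inequality makes $\|\nabla v\|_{L^2(\Omega_R)}$ an equivalent norm on $W$, while the trace inequality $\|v\|_{L^2(\Gamma_R)} \le C\|\nabla v\|_{L^2(\Omega_R)}$ controls the linear term $-\frac1\epsilon\int_{\Gamma_R}\rho g v$; as $B \ge 0$ and $(m+1)/m \le 2$ (because $m \ge 1$) the boundary integral is finite and of lower order, and $\rho$ is bounded above and below by positive constants on the bounded set $\Gamma_R$. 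Hence $J$ is coercive, strictly convex and weakly lower semicontinuous on $W$.

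Existence and uniqueness then follow from the direct method: $J$ admits a unique minimizer $v_R \in W$, and computing its first variation shows that $v_R$ solves \eqref{061150} with $z_R := \beta(v_R|_{\Gamma_R})$. Once nonnegativity is established below, $\beta(v_R) = (v_R)^{1/m}$ and the particular extension of $\beta$ becomes immaterial. (Equivalently, one may view the associated operator as coercive and strictly monotone and invoke the Browder--Minty theorem, but the variational route is the most economical.)

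For the three listed properties I test \eqref{061150} with truncations vanishing on $\Sigma_R$, using that the Dirichlet term yields a nonnegative quadratic contribution and that $\beta$ is increasing, so $v_R - \tilde v_R$ and $z_R - \tilde z_R$ carry the same sign. For \textbf{(i)} I subtract the formulations for $g$ and $\tilde g$ and test with $p_\delta(v_R - \tilde v_R)$, where $p_\delta$ is a smooth nondecreasing approximation of $\mathbf 1_{(0,\infty)}$ with $p_\delta \equiv 0$ on $(-\infty,0]$ (so it vanishes on $\Sigma_R$, where $v_R = \tilde v_R = 0$); the gradient term equals $\int p_\delta'(v_R - \tilde v_R)\,|\nabla(v_R - \tilde v_R)|^2 \ge 0$, and letting $\delta \to 0$ gives \eqref{14111}. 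Taking $\tilde g \equiv 0$ (whence $\tilde v_R \equiv 0$) and testing instead with the negative part $(v_R)_-$ yields $\int|\nabla(v_R)_-|^2 \le 0$, hence $v_R \ge 0$ and $z_R \ge 0$. For \textbf{(ii)}, testing with $(v_R - \|g\|_{L^\infty}^m)_+$, which vanishes on $\Sigma_R$ since $v_R = 0 \le \|g\|_{L^\infty}^m$ there, and noting that on $\{v_R > \|g\|_{L^\infty}^m\}$ one has $z_R = v_R^{1/m} > \|g\|_{L^\infty} \ge g$, forces this positive part to vanish, giving \eqref{14113}. For \textbf{(iii)}, I extend $(v_R - v_{R'})_+$ by zero to $\Omega_{R'}$ — legitimate because on $\Sigma_R$ one has $v_R = 0 \le v_{R'}$, so the positive part is zero there — subtract the two weak formulations over $\Omega_R$, and conclude $\int|\nabla(v_R - v_{R'})_+|^2 \le 0$, that is $v_{R'} \ge v_R$ in $\Omega_R$.

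The main obstacle is the analytic treatment of the nonlinear boundary term: one must verify that the trace of $v_R$ lies in a space rendering $\int_{\Gamma_R}\rho\,B(v)$ finite and $J$ G\^ateaux differentiable, and that the truncations employed in (i)--(iii) are admissible elements of $W$. The extension-by-zero step in (iii) and the sign bookkeeping linking $v_R$ to $z_R$ through the increasing $\beta$ are the delicate points; the remainder is standard convex analysis together with Stampacchia-type truncation arguments.
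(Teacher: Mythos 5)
Your proposal is correct, and for the existence/uniqueness part and for property (i) it essentially coincides with the paper's proof: the paper also minimizes the convex functional $J(v)=\tfrac12\int_{\Omega_R}|\nabla v|^2+\tfrac{m}{m+1}\int_{\Gamma_R}\rho\,v^{\frac{m+1}{m}}-\int_{\Gamma_R}\rho\,v\,g$ over $\{v\in H^1(\Omega_R):v|_{\Sigma_R}=0\}$ via coercivity, and proves \eqref{14111} by testing with $p(v_R-\tilde v_R)$ for a smooth monotone approximation $p$ of the sign function and passing to the limit (your $p_\delta$ approximating $\mathbf 1_{(0,\infty)}$ is the same device). Where you genuinely diverge is in properties (ii) and (iii). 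The paper proceeds in two stages: it first uses the contraction inequality \eqref{14111} against the constant supersolution $\tilde g=\|g\|_{L^\infty(\Gamma)}$ (respectively, against $v_{R'}$ restricted to $\Omega_R$, which is a supersolution there since $v_{R'}\ge 0$ on $\Sigma_R$) to order the \emph{traces} $z_R\le \tilde z_R$ (resp.\ $z_R\le z_{R'}$), and then invokes the comparison principle for harmonic functions on the difference, which is harmonic in $\Omega_R$ with nonnegative boundary data on $\Gamma_R\cup\Sigma_R$, to propagate the ordering into the interior. You instead run Stampacchia-type truncation arguments entirely inside the weak formulation \eqref{061150}: testing with $(v_R-\|g\|_{L^\infty}^m)_+$, $(v_R)_-$, and $(v_R-v_{R'})_+$ extended by zero across $\Sigma_R$, and exploiting that $\beta$ is increasing so the boundary terms have a sign. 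Both routes are sound; the paper's buys brevity by leaning on classical potential theory (maximum principle for harmonic functions), while yours is self-contained and avoids comparison principles altogether, at the price of checking that $H^1\cap L^\infty$ truncations are admissible test functions in Definition \ref{231101} (stated there for $C^1$ functions) -- a density step you flag correctly, and one which the paper's own proof of (i) needs just as much, since $p(v_R-\tilde v_R)$ is not $C^1(\overline{\Omega_R})$ either. A further small point in your favor: extending the nonlinearity to $\beta(s)=|s|^{1/m-1}s$ makes the functional well defined off the nonnegative cone, something the paper glosses over by writing $v^{\frac{m+1}{m}}$ for possibly sign-changing $v$.
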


\begin{proof} Let $\tilde H^1(\Omega_R):=\{f\in H^1(\Omega_R)\,:\, f|_{\Sigma_R}=0\}$. 
Existence of solutions easily follows by standard arguments, since
the functional $J: \tilde H^1(\Omega_R)\to \Rr$ defined by
\[
J(v):= \frac{1}{2}\int_{\Omega_R} \left|\nabla v\right|^2\, \dd x\, \dd y + \frac{m}{m+1}\int_{\Gamma_R} \rho\, v^\frac{m+1}{m}\, \dd x - \int_{\Gamma_R} \rho\, v\,g\, \dd x
\]
is coercive in $\tilde H^1(\Omega_R)$; in fact, for suitable constants $C_1=C_1(R)>0$ and $C_2=C_2(R)>0$, we have
\begin{multline*}
J(v)        \geq \frac{1}{2}\int_{\Omega_R} \left|\nabla v\right|^2\, \dd x\, \dd y
        + \left(\min_{\Gamma_R}\rho\right) \frac{m}{m+1}\int_{\Gamma_R} v^\frac{m+1}{m}\, \dd x\\
        - \left(\max_{\Gamma_R }\rho\right) \int_{\Gamma_R} v\,g\, \dd x
        \geq C_1 \|v\|^2_{H^1(\Omega_R)} - C_2\| v\|_{H^1(\Omega_R)}.
\end{multline*}

Now, to show contractivity, take a smooth monotone approximation
$p$ of the sign function, and set $\varphi(x):= p[v_R(x) -
\tilde{v}_R(x)],\, x\in \Omega_R$. Clearly, such a $\varphi$ can
be used as a test function in the weak formulation of Definition
\ref{231101}. We get
\begin{multline*}
0= \int_{\Omega_R} p'(v_R - \tilde{v}_R) |\nabla (v_R - \tilde{v}_R)|^2 \,\dd x\, \dd y \\
    + \frac{1}{\epsilon}\int_{\Gamma_R} \rho \left(z_R - \tilde{z}_R \right) p[(z_R)^m- (\tilde{z}_R)^m]\,\dd x
    -  \frac{1}{\epsilon}\int_{\Gamma_R} \rho \left(g - \tilde{g} \right) p[(z_R)^m - (\tilde{z}_R)^m] \,\dd x.
\end{multline*}
Passing to the limit in the approximation $p$, we discover
\[
\int_{\Gamma_R} \rho \left(z_R - \tilde{z}_R\right)_{+}\,\dd x
\leq \int_{\Gamma_R} \rho \left(g - \tilde{g} \right)\D{sgn}(z_R^m - \tilde{z}_R^m) \,\dd x
\]
which immediately gives \eqref{14111}. Furthermore, if $g\geq 0$,
\eqref{14111} implies $z_R\geq 0$ in $\Gamma_R$. From
\eqref{14111} uniqueness can be easily deduced.

To prove uniform boundedness, set $\tilde g:= \|g\|_{L^\infty(\Gamma)}$;  thus the function $\tilde{v}_R:= (\tilde{g})^m$ verifies:
\[
    \begin{cases}
     \Delta\tilde{v}_R = 0 & (x,y)\in \Omega_R\\
    \displaystyle-\epsilon\frac{\partial \tilde{v}_R}{\partial y} + \rho\,(\tilde{v}_R)^\frac{1}{m}= \rho\, \tilde g \geq \rho\, g & x\in \Gamma_R\\
 \tilde{v}_R\geq 0 & (x,y)\in \Sigma_R.
    \end{cases}
\]
Then, by \eqref{14111}, $z_R \leq \tilde{z}_R$ in $\Gamma_R$.
Thus, the function $\zeta_R:= \tilde{v}_R -v_R$  satisfies the
problem
\begin{equation}
\label{291102}
    \begin{cases}
    \Delta \zeta_R = 0 & (x,y)\in \Omega_R\\
    \zeta_R\geq 0 & (x,y)\in \Sigma_R \cup \Gamma_R.
    \end{cases}
\end{equation}
The comparison principle then implies $\zeta_R\geq 0$ in $\Omega_R$, that is
\[
v_R\leq \|g\|_{L^\infty(\Gamma)}^{m}, \quad \text{in $\Omega_R$}.
\]
On the other hand, since $\underline{z}\equiv 0$ is a subsolution to problem \eqref{291102}, $v_R\geq 0$; \eqref{14113} is then proved.

To establish {\em iii.} we consider $v_{R'}$ solution of
\eqref{061111} in $\Omega_{R'}$. Since, by {\em ii.}, $v_{R'}$ is
nonnegative in $\Omega_{R'}$, we have
\[
    \begin{cases}
    \Delta v_{R'} = 0 & (x,y)\in \Omega_R\\
    \displaystyle-\epsilon\frac{\partial v_{R'}}{\partial y} + \rho\, (v_{R'})^\frac{1}{m}= \rho\, g & x\in \Gamma_R\\
    v_{R'}\geq 0 & (x,y)\in \Sigma_R.
    \end{cases}
\]
Then, by \eqref{14111},
\[
 \int_{\Gamma_R} \left( z_R - z_{R'}\right)_{+} \rho\, \dd x
= 0;
\]
this implies $z_{R'}\geq z_{R}$ on $\Gamma_R$. To conclude,  observe that the function $\eta_R:= v_{R'}-v_R$ satisfies,
\[
    \begin{cases}
    \Delta \eta_R = 0 & (x,y)\in \Omega_R\\
    \eta_R\geq 0 & (x,y)\in \Sigma_R \cup \Gamma_R.
    \end{cases}
\]
Hence, by comparison, $ v_{R'}\geq v_{R}$ on $\Omega_R$.
\end{proof}

Next step in the proof of Theorem \ref{06116} consists in studying the following evolutive problem in $\Omega_R$:

\begin{equation}
\label{06118}
    \begin{cases}
    \Delta w_R = 0 & (x,y)\in \Omega_R, \quad t>0\\
    w_R = 0 & (x,y)\in \Sigma_R, \quad t>0\\
    \displaystyle \frac{\partial w_R}{\partial y}= \rho\, \frac{\partial\left[ (w_R)^\frac{1}{m}\right]}{\partial t} & x\in \Gamma_R, \quad t>0\\
     w_R=u_0^m & x\in \Gamma_R,  \quad t=0.
    \end{cases}
\end{equation}

\begin{definition}
\label{231103} A \emph{solution} to problem \eqref{06118} is a
pair of functions $(u_R, w_R)$ such that
\begin{itemize}
\item{} $u_R\in
L^{\infty}(\Gamma_R\times (0,\infty))\cap C([0,\infty); L^1(\Gamma_R))$, $ u_R\ge 0$;
\item{} $w_R\in
L^{\infty}(\Omega_R\times (0,\infty))$,  $w_R\ge 0$, $|\nabla w_R|\in
L^2(\Omega_R\times (0, +\infty))$;
\item{} $w_R\big{|}_{\Gamma_R\times (0,\infty)}=u_R^m$, $w_R\big{|}_{\Sigma_R\times(0,\infty)}=0$;
\item{} for any $T>0$ and any $\psi\in C^{2,1}_{x,t}(\overline{\Omega_R}\times [0,T])$, $\psi\equiv 0$ in $\partial\Sigma_R \times (0,T)$ there holds
    \begin{multline}
    \label{231104}
- \int_0^T \int_{\Omega_R} \langle \nabla w_R, \nabla \psi \rangle \,\dd x\, \dd y\, \dd t
+ \int_0^T \int_{\Gamma_R}\rho \,  u_R \partial_t \psi \, \dd x\, \dd t  \\
= \int_{ \Gamma_R} \rho(x) \Bigl[ u_R(x,T)\psi(x,0,T) - u_0(x)\psi(x,0,0)\Bigr] \,\dd x\,.
     \end{multline}
\end{itemize}
\end{definition}

Observe that, by standard results, for any $g\in L^\infty(\Gamma_R)$ there exists a unique
solution $v\in W^{1,2}(\Omega_R)\cap L^\infty(\Omega_R)$ of problem
\[
\begin{cases}
\Delta v =0 & \text{in }\Omega_R\\
v=0 & \text{on }\Sigma_R\\
v(x,0)= g(x) & \text{on }\Gamma_R;
\end{cases}
\]
here boundary conditions are meant in the sense of trace. We shall denote such a solution
by $\D{E}_R(g)$ since it can be regarded as the harmonic extension of $g$ to $\Omega_R$, completed with
homogeneous zero Dirichlet boundary condition on $\Sigma_R$. Indeed, by standard elliptic regularity results, $v\in C^2(\Omega_R)\cap C(\Omega_R\cup\Sigma_R)$ and
$v(x)=0$ for all $x\in \Sigma_R$.

\begin{proposition}
\label{06117} Let assumption \eqref{A0} be satisfied. Then for any
$R>0$ there exists a solution $(u_R, w_R)$ of problem
\eqref{06118}.
\end{proposition}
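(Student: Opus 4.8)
The plan is to obtain the evolutive solution $(u_R,w_R)$ of the finite-domain problem \eqref{06118} as a time-discretization limit, using the stationary problem \eqref{061111} (whose solvability and good properties are guaranteed by Lemma \ref{061110}) as the building block for an implicit Euler scheme. Fix $R>0$ and a time step $h=T/n$. I would define recursively a sequence $\{v^k\}_{k=0}^n\subset W^{1,2}(\Omega_R)\cap L^\infty(\Omega_R)$ and traces $\{z^k\}$ as follows: set $v^0:=\D{E}_R(u_0^m)$ (or more simply take the initial datum $z^0:=u_0$), and given $z^{k-1}$, let $(z^k,v^k)$ be the unique solution of the stationary problem \eqref{061111} with $\epsilon:=h$ and datum $g:=z^{k-1}$. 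The elliptic equation $\Delta v^k=0$ together with the boundary relation $-h\,\partial_y v^k+\rho (v^k)^{1/m}=\rho\, z^{k-1}$ on $\Gamma_R$ is exactly the implicit discretization of the dynamical boundary condition $\partial_y w_R=\rho\,\partial_t(w_R^{1/m})$, since $(v^k)^{1/m}-z^{k-1}$ divided by $h$ approximates $\partial_t u_R$. This identification is the conceptual heart of the construction.

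Next I would extract uniform estimates from Lemma \ref{061110}, these being the engine of compactness. The uniform boundedness \eqref{14113} gives $0\le v^k\le\|u_0^m\|_\infty$ for every $k$, hence the piecewise-constant-in-time interpolants $w_R^{(h)}$ and the trace interpolants $u_R^{(h)}$ are bounded in $L^\infty$ uniformly in $h$. The contractivity \eqref{14111}, applied to the pair of data $z^{k-1}$ and $z^{k-2}$ at consecutive steps, yields an $L^1_\rho(\Gamma_R)$ bound on the time-increments $z^k-z^{k-1}$ and hence the equicontinuity in time in $C([0,T];L^1(\Gamma_R))$ that Definition \ref{231103} demands; this is where the order-preserving, $L^1$-contractive structure of the scheme pays off. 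For the spatial regularity I would test the weak formulation \eqref{061150} with $\varphi=v^k$ and use the discrete boundary relation to produce a summation-by-parts (in $k$) energy identity; summing the $\int_{\Omega_R}|\nabla v^k|^2$ against the telescoping boundary terms bounds $\sum_k h\int_{\Omega_R}|\nabla v^k|^2$, i.e. $\nabla w_R^{(h)}$ in $L^2(\Omega_R\times(0,T))$ uniformly in $h$. Together these give weak-$*$ $L^\infty$ limits, weak $L^2$ limits of gradients, and—via the time-equicontinuity plus the compact embedding of the trace—strong $L^1$ convergence of $u_R^{(h)}$ along a subsequence.

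Finally I would pass to the limit $h\to0$. The a priori bounds let me identify limits $w_R\in L^\infty$ with $\nabla w_R\in L^2(\Omega_R\times(0,T))$ and $u_R\in L^\infty\cap C([0,T];L^1(\Gamma_R))$, with $w_R|_{\Gamma_R}=u_R^m$ and $w_R|_{\Sigma_R}=0$ inherited from the boundary conditions of each $v^k$. Writing the discrete weak identity obtained by multiplying \eqref{061150} by a test function $\psi(\cdot,t_k)$ and summing an Abel summation in $k$ reproduces, up to $O(h)$ consistency errors, the weak formulation \eqref{231104}; passing to the limit using the established convergences yields \eqref{231104} for all admissible $\psi$. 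The main obstacle I anticipate is the passage to the limit in the nonlinear boundary term: one must show $u_R^{(h)}\to u_R$ strongly (not merely weakly) on $\Gamma_R\times(0,T)$ so that $(u_R^{(h)})^m\to u_R^m$ and the relation $w_R|_\Gamma=u_R^m$ survives in the limit. This is secured precisely by the time-equicontinuity from contractivity combined with the $L^2$ gradient bound and an Aubin–Lions–type compactness argument on the traces; handling the monotonicity property \eqref{14111} carefully to upgrade weak to strong convergence is the delicate point, while all remaining steps are the routine limit passages in the linear terms.
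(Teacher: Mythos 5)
Your construction follows the same skeleton as the paper's proof: an implicit Euler scheme built on the elliptic problem \eqref{061111} with $\epsilon$ equal to the time step and $g$ equal to the previous iterate, the uniform $L^\infty$ bound from \eqref{14113}, an energy bound obtained by testing with the iterate itself and telescoping in $k$, and a discrete summation by parts that recovers \eqref{231104} in the limit. The place where your argument and the paper's genuinely diverge is the convergence of the discrete trace functions, and there your proposal has a gap.

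You claim that contractivity \eqref{14111}, applied at consecutive steps, yields equicontinuity in $C([0,T];L^1(\Gamma_R))$. What \eqref{14111} actually gives is the monotonicity $\|z^k-z^{k-1}\|_{L^1_\rho(\Gamma_R)}\le \|z^1-z^0\|_{L^1_\rho(\Gamma_R)}$, so equicontinuity requires the \emph{first} increment to be $O(h)$. From the discrete boundary condition, $z^1-u_0=\frac{h}{\rho}\,\partial_y v^1$ on $\Gamma_R$, so you need $\partial_y v^1|_{\Gamma_R}$ --- essentially $\partial_y \D{E}_R(u_0^m)|_{\Gamma_R}$, i.e.\ (up to the cut-off at $\Sigma_R$) the half-Laplacian of $u_0^m$ --- to be bounded in $L^1(\Gamma_R)$ uniformly in $h$. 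Under \eqref{A0} the datum $u_0$ is merely continuous and bounded, and the normal derivative of the harmonic extension of such a datum is in general only a distribution, not an $L^1$ function; hence the $O(h)$ increment bound fails, and with it the equicontinuity, the Aubin--Lions step, and the strong convergence you need to pass to the limit in the nonlinear boundary relation. This is precisely the difficulty the paper sidesteps: it rewrites the scheme as the resolvent iteration $u^{\epsilon,k}_R+\epsilon A(u^{\epsilon,k}_R)=u^{\epsilon,k-1}_R$ for the operator $A(v)=-\frac1\rho\,\partial_y \D{E}_R(v^m)\big|_{\Gamma_R}$, checks from Lemma \ref{061110} that $(I+\epsilon A)^{-1}$ is an $L^1$-contraction and that the range condition holds, and then invokes the Crandall--Liggett theorem, whose built-in density argument produces the limit $u_R\in C([0,T];L^1(\Gamma_R))$ for initial data lying only in $\overline{\mathcal{D}(A)}$, with no need for $A(u_0)$ to be a function. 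You could repair your direct argument by mollifying $u_0$, proving equicontinuity for the regularized data, and transferring it to $u_0$ via the uniform-in-$h$ $L^1$ contraction of the scheme --- but that is in substance re-proving Crandall--Liggett, and citing it, as the paper does, is the cleaner route.
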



\begin{proof}
We introduce a  time-discretized version of \eqref{06118}. Fix any
$T>0$. For any $n\in \Nn$ we divide the time interval $[0,T]$ in
$n$ subintervals of length $\epsilon= T/n$ and endpoints
$[(k-1)\epsilon, \, k\epsilon]$ for $k=1, \dots, n$. For any $k=1,
\dots, n$, by Lemma \ref{061110} with $g=u_R^{\epsilon, k-1}$, there exists a unique solution $(u_R^{\epsilon, k}, w_R^{\epsilon, k})$
to the problem
\begin{equation}
\label{06119}
    \begin{cases}
    \Delta w^{\epsilon, k}_R = 0 & (x,y)\in \Omega_R\\
    w^{\epsilon, k}_R = 0 & (x,y)\in \Sigma_R\\
    \displaystyle \epsilon \, \frac{\partial w^{\epsilon, k}_R}{\partial y}
        = \rho\, \left( u^{\epsilon, k}_R - u^{\epsilon, k-1}_R\right)& x\in \Gamma_R\\
     u^{\epsilon, 0}_R=u_0 & x\in \Gamma_R.
    \end{cases}
\end{equation}
The solution $(u^{\epsilon, k}_R, w^{\epsilon, k}_R)$ satisfies
\[
u^{\epsilon, k-1}_R = \left.\left(\left(w^{\epsilon, k}_R\right)^\frac{1}{m}\right)\right|_{\Gamma_R}, \qquad
w^{\epsilon, k}_R = \D{E}_R\left(\left(u^{\epsilon, k}_R\right)^m\right).
\]


We can rewrite the problem on $\Gamma_R$ in \eqref{06119} as
\[
\begin{cases}
u^{\epsilon, k}_R + \epsilon A (u^{\epsilon, k}_R) = u^{\epsilon, k-1}_R, & x\in \Gamma_R, \quad k=1, \dots, n\\
u^{\epsilon, 0}_R = u_0, & x\in \Gamma_R,
\end{cases}
\]
where $A: \mathcal{D}(A)\subset L^1(\Gamma_R) \to L^{\infty}(\Gamma_R)$ is the operator defined
\[
A(v):= - \frac{1}{\rho}\, \left.\left( \frac{\partial \D{E}_R(v^m)}{\partial y} \right)\right|_{\Gamma_R},
\]
with domain
\begin{multline*}
\mathcal{D}(A):= \Big\{ v\in L^{\infty}(\Gamma_R):
\quad A(v)\in L^{\infty}(\Gamma_R), \quad
\|v\|_{L^{\infty}(\Gamma_R)}\leq \|u_0\|_{L^{\infty}(\Gamma_R)}\Big\}.
\end{multline*}
The operator $A$ satisfies the following properties. For any $\epsilon>0$ the mapping
$(I+\epsilon A)$ is one-to-one from $\mathcal{D}(A)$ onto a subspace $\mathcal{R}_\epsilon(A)\subseteq L^{\infty}(\Gamma_R)$.
In fact, by Lemma \ref{061110},  for any $\epsilon>0$ there exists a unique solution of
\begin{equation}
\label{061113}
    \begin{cases}
    \Delta w^{\epsilon, k}_R = 0 & (x,y)\in \Omega_R\\
    \displaystyle  u^{\epsilon, k}_R - \epsilon  \frac{1}{\rho} \frac{\partial w^{\epsilon, k}_R}{\partial y} = u^{\epsilon, k-1}_R& x\in \Gamma_R\\
    w^{\epsilon, k}_R = 0 & (x,y)\in \Sigma_R;
    \end{cases}
\end{equation}
in addition, by \eqref{14111}, the inverse mapping
\[
(I+\epsilon A)^{-1} : \mathcal{R}_\epsilon(A) \to L^\infty(\Gamma_R)
\]
is a contraction with respect to the norm of $L^{1}(\Gamma_R)$.

A second property satisfied by $A$ is the following rank condition: for any $\epsilon>0$
\[
\mathcal{R}_\epsilon(A)= L^{\infty}(\Gamma_R) \supseteq \overline{ L^{\infty}(\Gamma_R) } = \overline{\mathcal{D}(A)}
\quad\text{for any }\epsilon>0.
\]

The validity of the two properties above permits to invoke the
Crandall--Liggett theorem (see \cite{CL})  and to infer that
$u^{\epsilon, k}_R$ converges in $L^1(\Gamma_R)$ to some function
$u_R\in C([0,T]; L^1(\Gamma_R))$, as $\epsilon \to 0$.
Furthermore, by \eqref{14113} and \eqref{A0}-(ii),
\[
0\leq w^{\epsilon, k}_R \leq \|u_0\|_{L^\infty(\Rr)}^{m}
\text{ in $\Omega_R$, uniformly with respect to $\epsilon$, $k$ and $R$.}
\]
Then $w^{\epsilon, k}_R$ converges weakly-* to some function $w_R\in L^\infty(\Omega_R\times [0,T])$ as $\epsilon \to 0$.

Using $w^{\epsilon, k}_R$ as a test function for \eqref{06119}, integrating by parts, and applying Young's inequality we obtain
\begin{equation}
\label{261101} 0\leq \epsilon \int_{\Omega_R} \left|\nabla w^{\epsilon,
k}_R\right|^2\, \dd x\, \dd y \leq \frac{1}{1+m}\int_{\Gamma_R } \rho
\left[ \left(u^{\epsilon, k-1}_R\right)^{m+1} - \left(u^{\epsilon,
k}_R\right)^{m+1} \right]\, \dd x.
\end{equation}

Then, adding for $k$ from $1$ to $n$ and passing to the limit as $\epsilon\to 0$ we discover
\[
\int_0^T \int_{\Omega_R} |\nabla w_R|^2 \, \dd x\, \dd y\, \dd t
\leq \frac{1}{1+m}  \int_{\Gamma_R} \rho\,\left (u_0\right)^{m+1} \dd x.
\]
Thus, $w_R\in L^2([0,T]; H^1(\Omega_R))$. Moreover, it is easily seen that $w_R\big{|}_{\Sigma_R\times (0,\infty)}=0$.
Concerning the limit function $u_R$, we observe that, by \eqref{261101},
\[
\int_{\Gamma_R } \rho\, \left(u^{\epsilon, k}_R\right)^{m+1}\,  \dd x \leq
\int_{\Gamma_R } \rho\,\left(u^{\epsilon, k-1}_R\right)^{m+1}\,  \dd x \leq
\int_{\Gamma_R } \rho\,u_0^{m+1}\,  \dd x,
\]
for any $\epsilon>0$; then for $\epsilon\to 0$, we get
\[
\int_{\Gamma_R }\rho\,(u_R(x, t))^{m+1}\,  \dd x \leq
\int_{\Gamma_R } \rho\, u_0^{m+1}\,  \dd x, \quad\text{for any
}t\in[0,T].
\]

Now, by choosing appropriate test functions as in \cite{RoV} and
taking into account \eqref{061150} we have for any $k=1, \dots,
n$,
    \begin{equation}
    \label{261104}
    \int_{\Omega_R} \langle \nabla w^{\epsilon, k}_R, \nabla \psi \rangle \,\dd x\, \dd y
    = \frac{1}{\epsilon}\int_{\Gamma_R} \rho \left[u^{\epsilon, k-1}_R - u^{\epsilon, k}_R \right] \psi\,\dd x.
     \end{equation}
Integrating on $[t_{k-1}, t_k]$ and then adding over $k$, we can rewrite the right hand side of equality above as
\begin{align}
& \frac{1}{\epsilon} \sum_{k=1}^n \int_{t_{k-1}}^{t_k} \int_{\Gamma_R} \rho \left[u^{\epsilon, k}_R(x,t) - u^{\epsilon, k}_R (x,t) \right] \psi(x,0,t) \,\dd x \,\dd t \nonumber\\
= & \int_0^T \int_{\Gamma_R} \rho \, u^{\epsilon}_R(x,t) \frac{\psi(x, 0, t+\epsilon) - \psi(x, 0, t)}{\epsilon} \, \dd x\, \dd t \nonumber\\
& + \frac{1}{\epsilon} \int_0^\epsilon \int_{\Gamma_R}\rho\,  u_0(x) \psi(x, 0, t)\, \dd x\, \dd t - \frac{1}{\epsilon} \int_{T-\epsilon}^ T \int_{\Gamma_R}\rho\,  u^\epsilon_R (x,T) \psi(x, 0, t)\, \dd x \, \dd t , \label{261105}
\end{align}
where $u^\epsilon_R := \{u^{\epsilon, 1}_R, \dots, u^{\epsilon, n}_R\}$.
By sending $\epsilon$ to $0$, \eqref{261104} and \eqref{261105}  give \eqref{231104}.
\end{proof}

In the next Proposition we list some properties satisfied by solutions of \eqref{06118}.

\begin{proposition}
\label{14115}
Let $(u_R,w_R)$ be a solution of \eqref{06118}. Then the following properties are satisfied:
\begin{itemize}
\item[i.] If $u_R$ and $\tilde{u}_R$ are solutions of \eqref{06118} corresponding to $u_0$ and $\tilde{u}_0$ respectively, then
\begin{equation}
\label{14114}
\int_{\Gamma_R} \left[ u_R (x,t)- \tilde{u}_R(x,t))\right]_{+} \rho\, \dd x
\leq \int_{\Gamma_R} \left[ u_0 - \tilde{u}_0 \right]_{+} \rho\, \dd x;
\end{equation}
in particular, if $u_0\geq 0$ then $u_R(x,t)\geq 0$ for every
$x\in \Gamma_R$ and every $t>0$;
\item[ii.] There exists a constant $C$ independent of $R$ such that $0\leq w_R\leq C$ for every $x\in \Omega_R$ and every $t>0$;
\item[iii.] For any $R'>R$, $w_{R'}(x,t) \geq w_{R}(x,t)$ for every $x\in \Omega_{R}$ and every $t>0$;
\item[iv.] The inequality
\[
\rho(x)\left[(m-1)t\partial_t w_R + mw_R\right]\geq 0
\]
holds in the sense of distributions in $\Omega_R \times (0, \infty)$;
\item[v. ]$\partial_t u \in L^1_{loc} ((0, \infty); L^1(\Gamma))$;
\item[vi. ] For any $0\leq \tau <T$,
\begin{multline}
\label{261107}
 \int_\tau^T \int_{\Omega_R} |\nabla w_R|^2\, \dd x\, \dd y \,\dd t
+ \frac{1}{m+1}\int_{\Gamma_R } \rho \, (u_R)^{m+1}(x, T) \,\dd x \\
= \frac{1}{m+1}\int_{\Gamma_R } \rho \, (u_R)^{m+1}(x, \tau) \,\dd x;
\end{multline}
\item[vii.] For any $0\leq \tau <T$ and any cutoff $\eta\in C^2_0 (\bar\Omega)$ with $0\le \eta\le 1$ and
$\D{supp}(\eta)\subset B_R $,
\begin{multline}
\label{291104}
 \int_\tau^T \int_{\Omega_R} |\nabla w_R|^2 \, \eta^2\, \dd x\, \dd y \,\dd t
+ \frac{2}{m+1}\int_{\Gamma_R } \rho(x) \, (u_R)^{m+1}(x, T) \,\eta^2\,\dd x \\
= \frac{2}{m+1}\int_{\Gamma_R } \rho(x) \, (u_R)^{m+1}(x,
\tau)\,\eta^2 \,\dd x - 2  C^2\int_0^T \int_{\Omega_R}
|\nabla\eta|^2 \, \dd x\, \dd y \,\dd t.
\end{multline}

\end{itemize}
\end{proposition}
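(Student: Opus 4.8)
The plan is to obtain i--iii by transferring to the limit $(u_R,w_R)$ constructed in Proposition~\ref{06117} the properties of the stationary problem established in Lemma~\ref{061110}, evaluated along the Crandall--Liggett scheme \eqref{06119}. For i, I would iterate the contractivity \eqref{14111}: choosing $g=u^{\epsilon,k-1}_R$, $\tilde g=\tilde u^{\epsilon,k-1}_R$ gives $\int_{\Gamma_R}\rho(u^{\epsilon,k}_R-\tilde u^{\epsilon,k}_R)_+\le\int_{\Gamma_R}\rho(u^{\epsilon,k-1}_R-\tilde u^{\epsilon,k-1}_R)_+$, and a downward induction on $k$ reduces the right-hand side to $\int_{\Gamma_R}\rho(u_0-\tilde u_0)_+$; since $u^{\epsilon,k}_R\to u_R(\cdot,t)$ in $L^1(\Gamma_R)$ as $k\epsilon\to t$ and $s\mapsto s_+$ is $1$-Lipschitz, passing to the limit yields \eqref{14114}, and nonnegativity follows from the last assertion of Lemma~\ref{061110} iterated in $k$. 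Property ii is inherited from \eqref{14113}: since $\|u^{\epsilon,k}_R\|_{L^\infty}\le\|u_0\|_{L^\infty}$ along the scheme, one gets $0\le w^{\epsilon,k}_R\le\|u_0\|^m_{L^\infty(\Rr)}=:C$, a bound independent of $\epsilon,k,R$ that survives the weak-$*$ limit. For iii I would argue inductively in $k$, combining the monotonicity in $R$ of Lemma~\ref{061110} with the order preservation in the data contained in \eqref{14111}: if $u^{\epsilon,k-1}_{R'}\ge u^{\epsilon,k-1}_R$ on $\Gamma_R$, comparing the harmonic extensions first at fixed data on the larger domain and then in the data on $\Omega_R$ gives $w^{\epsilon,k}_{R'}\ge w^{\epsilon,k}_R$ on $\Omega_R$, and the limit gives iii.

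The heart of the matter is iv, a B\'enilan--Crandall type estimate, which I would deduce from a scaling invariance of \eqref{06118} together with the comparison contained in i. The decisive point is that, \emph{because $\rho=\rho(x)$ is independent of $t$}, for every $\lambda>0$ the rescaled pair
\[
w^\lambda(x,y,t):=\lambda^{\frac{m}{m-1}}w_R(x,y,\lambda t),\qquad
u^\lambda(x,t):=\lambda^{\frac{1}{m-1}}u_R(x,\lambda t)
\]
is again a solution of \eqref{06118} with initial datum $\lambda^{\frac{1}{m-1}}u_0$; this is verified by the change of variable $s=\lambda t$ in the weak formulation \eqref{231104}, the exponent $\frac{m}{m-1}$ being exactly the one balancing the factor $\lambda$ produced by $\partial_t$ in the dynamical boundary condition. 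For $\lambda\ge 1$ and $m>1$ one has $\lambda^{\frac{1}{m-1}}u_0\ge u_0$, so comparison yields $u_R\le u^\lambda$, i.e.
\[
w_R(x,y,t)\le\lambda^{\frac{m}{m-1}}\,w_R(x,y,\lambda t)\qquad(\lambda\ge1).
\]
Testing this against a nonnegative $\phi\in C_c^\infty(\Omega_R\times(0,\infty))$, forming the difference quotient in $\lambda$ and letting $\lambda\to1^+$ produces, in the sense of distributions, $\frac{m}{m-1}w_R+t\,\partial_t w_R\ge0$; multiplying by $(m-1)>0$ and by $\rho\ge0$ gives iv (for $m=1$ the statement reduces to $\rho\,w_R\ge0$, i.e.\ ii). I expect this to be the main obstacle: one must check carefully that $w^\lambda$ is an admissible solution in the sense of Definition~\ref{231103} so that i applies, and that the difference quotients converge to $t\,\partial_t w_R$ distributionally.

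Property v is then a consequence of iv. Its pointwise form on $\Gamma_R$ reads $(\partial_t u_R)_-\le\frac{u_R}{(m-1)t}$, which is bounded on $\Gamma_R\times[\tau,T]$ for $\tau>0$ by \eqref{14113}; combining this with $\bigl|\int_\tau^T\!\!\int_{\Gamma_R}\rho\,\partial_t u_R\bigr|=\bigl|\int_{\Gamma_R}\rho\,(u_R(T)-u_R(\tau))\bigr|\le 2\|u_0\|_{L^\infty}\int_{\Gamma_R}\rho$ and the identity $|s|=s+2s_-$ controls $\int_\tau^T\!\!\int_{\Gamma_R}\rho\,|\partial_t u_R|$, and since $\rho$ is bounded below on the compact $\Gamma_R$ this gives $\partial_t u_R\in L^1_{loc}((0,\infty);L^1(\Gamma_R))$. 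Property vi is the energy identity obtained by using $w_R$ itself as a test function: since $\Delta w_R=0$ and $w_R=0$ on $\Sigma_R$, integration by parts gives $\int_{\Omega_R}|\nabla w_R|^2=-\int_{\Gamma_R}u_R^m\,\partial_y w_R=-\int_{\Gamma_R}\rho\,u_R^m\,\partial_t u_R$, and integrating in time while writing $\rho\,u_R^m\,\partial_t u_R=\frac{1}{m+1}\rho\,\partial_t(u_R^{m+1})$ yields exactly \eqref{261107}; admissibility of $w_R$ as a test function is guaranteed by the time regularity from v (after the usual smoothing to meet the $C^{2,1}_{x,t}$ requirement), while \eqref{261101} provides the discrete counterpart confirming the sign.

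Finally, vii is the localized version of vi, obtained by repeating the computation of vi with $w_R\,\eta^2$ in place of $w_R$. The additional term generated by $\nabla(w_R\eta^2)=\eta^2\nabla w_R+2w_R\eta\nabla\eta$ is the cross term $-2\int_\tau^T\!\!\int_{\Omega_R}w_R\,\eta\,\langle\nabla w_R,\nabla\eta\rangle$, which I would control by Young's inequality together with the uniform bound $w_R\le C$ from ii, absorbing part of it into $\int_\tau^T\!\!\int_{\Omega_R}|\nabla w_R|^2\eta^2$ and bounding the remainder by a multiple of $C^2\int|\nabla\eta|^2$, so as to reach \eqref{291104}. This last step is routine once ii and vi are in hand, the only care being the choice of weights in Young's inequality that leaves a clean coefficient in front of $\int_\tau^T\!\!\int_{\Omega_R}|\nabla w_R|^2\eta^2$.
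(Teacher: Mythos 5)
Your proposal is correct and follows essentially the same route as the paper: properties \emph{i.}--\emph{iii.} are inherited from Lemma \ref{061110} along the Crandall--Liggett scheme, property \emph{vii.} is obtained by the very same cutoff-plus-Young's-inequality computation, and for \emph{iv.}--\emph{vi.} the paper simply cites \cite{V1}, whose arguments are precisely the B\'enilan--Crandall scaling comparison and energy identity that you spell out (including the correct reduction of \emph{iv.} to nonnegativity when $m=1$). The only point worth noting is that \eqref{291104} can only hold as an inequality with a nonnegative gradient term on the right (Young's inequality and absorption cannot produce an equality), which is exactly what your derivation yields and all that is used later; the equality sign and the sign of the last term in the paper's statement are typos.
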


\begin{proof}
Properties {\em i.}--{\em iii.} are inherited by $(u_R, w_R)$ from those listed in Lemma \ref{061110} for solutions to the auxiliary elliptic problem. In particular  {\em i.} is a consequence of \eqref{14111}. Property {\em ii.} follows taking into account that $w_R$ is null on $\Sigma_R$ and previous property {\em i.}; property {\em iii.} follows by Lemma \ref{061110}-{\em iii.}

Properties {\em iv.}--{\em vi.} can be obtained arguing as in
\cite[Sections 3 and 5]{V1}. Moreover, it is direct to check that
\begin{gather*}
 \int_\tau^T \int_{\Omega_R} |\nabla w_R|^2 \, \eta^2\, \dd x\, \dd y \,\dd t
+ \frac{1}{m+1}\int_{\Gamma_R } \rho(x) \,(u_R)^{m+1}(x, T) \,\eta^2\,\dd x \nonumber \\
= \frac{1}{m+1}\int_{\Gamma_R } \rho(x) \, (u_R)^{m+1}(x, \tau)\,\eta^2 \,\dd x
- 2  \int_\tau^T \int_{\Omega_R} \eta \, w_R \langle \nabla\eta , \nabla w_R\rangle \, \dd x\, \dd y \,\dd t.
\label{291106}
\end{gather*}
Using Young's inequality, we estimate the last term in the computation above as follows:

\begin{gather*}
\left| 2  \int_0^T \int_{\Omega_R} \eta \, w_R \langle \nabla\eta , \nabla w_R\rangle \, \dd x\, \dd y \,\dd t\right|
\leq 2  \int_0^T \int_{\Omega_R}  \bigl(\left|\nabla w_R\right| \eta\bigr) \bigl(w_R  \|\nabla\eta|
\bigr)\, \dd x\, \dd y \,\dd t\\
\leq \frac{1}{2} \int_0^T \int_{\Omega_R}  |\nabla w_R|^2 \eta^2 \, \dd x\, \dd y \,\dd t
+ 2 \int_0^T \int_{\Omega_R}  |w_R|^2  |\nabla\eta|^2 \, \dd x\, \dd y \,\dd t.
\end{gather*}

The last inequality, together with the fact that, by {\em ii.}, $w_R^2\leq C^2$, 
 gives \eqref{291104}.
\end{proof}

\begin{proof}[Proof of Theorem \ref{06116}]
By Proposition \ref{06117}, for any $R>0$ there exists a solution
$(u_R, w_R)$ of \eqref{06118}. Moreover, thanks to the uniform
boundedness and monotonicity of $w_R$ (properties {\em ii.} and
{\em iii.} of Proposition \ref{14115}), there exist the limits
 \begin{gather*}
 \lim_{R\to \infty} u_R=: u \quad\text{in $\Gamma\times (0,\infty)$},\\
 \lim_{R\to \infty}w_R =: w \quad\text{in $\Omega\times (0,\infty)$};
 \end{gather*}
again by Proposition \ref{14115}-{\em ii.}, $w\in
L^{\infty}(\Omega\times (0,\infty))$ and $u\in
L^{\infty}(\Gamma\times (0,\infty))$.

We shall prove that $(u,w)$ is a solution to problem \eqref{06111}. To see this, take $T>0$, $D$ and $\psi$ as in Definition \ref{06114}; then select $R_0>0$ large enough, such that $D\subset \Omega_{R_0}$. By Definition \ref{231103}, for every $R>R_0$ we have:
    \begin{multline}
    \label{03121}
- \int_0^T \int_{D} \langle \nabla w_R, \nabla \psi \rangle \,\dd x\, \dd y\, \dd t
+ \int_0^T \int_{\partial D\cap \Gamma_R}\rho \,  u_R \partial_t \psi \, \dd x \dd t  \\
= \int_{\partial D\cap  \Gamma_R} \rho(x) \Bigl[ u(x,T)\psi(x,0,T) - u_0(x)\psi(x,0,0)\Bigr] \,\dd x.
     \end{multline}

Furthermore, by Proposition \ref{14115}-{\em vii.}, since $u\in L^\infty(\Gamma\times (0,\infty))$ and $u_0\in L^\infty(\Gamma)$,
for each compact set $K\subset \bar \Omega$, there exists a positive constant $C$, depending on $K$ but independent of $R$ and $T>0$, such that
\[
\int_0^T\int_K |\nabla w_R|^2 \leq C \qquad\text{in $K$}.
\]
By usual compactness arguments, $\nabla w\in L^2_{loc}\big(\overline\Omega\times(0,\infty)\big)$, so $w|_{\Gamma}$ is well-defined; moreover,
letting $R\to \infty$ in \eqref{03121} we get
    \begin{multline}
    \label{03122}
- \int_0^T \int_{D} \langle \nabla w, \nabla \psi \rangle \,\dd x\, \dd y\, \dd t
+ \int_0^T \int_{\partial D}\rho \,  u\partial_t \psi \, \dd x \dd t  \\
= \int_{\partial D} \rho(x) \Bigl[ u(x,T)\psi(x,0,T) - u_0(x)\psi(x,0,0)\Bigr] \,\dd x.
     \end{multline}
Hence, it is direct to check
that $(u,w)$ is also a solution in the sense of Definition
\ref{06114}.
\end{proof}

\begin{remark}
\label{03128}
By construction, the solution $(u,w)$ given in the proof of Theorem \ref{06116} is minimal, namely, if $(\tilde{u}, \tilde{w})$ is a solution to \eqref{06112} with $\tilde{u}\geq 0$ and $\tilde{w}\geq 0$,  then $\tilde{u}\geq u$ and $\tilde{w}\geq w$.
\end{remark}

\begin{remark}\label{Rem2a}
$(i)$ As already pointed out in the Introduction, our construction of
the solution is slightly different from those in \cite{V1} and
\cite{V2}. In fact, the strategy in \cite{V1} consists in passing
first to the limit as $R\to \infty$ in the discretization
\eqref{06119} to obtain a solution of an elliptic problem in the
whole of $\Omega$, and then in sending the discretization
parameter $\epsilon$ to zero. Our approach consists instead in
passing first to the limit as $\epsilon\to 0$ in the discretized
elliptic problem, in order to get, for any $R>0$, a solution of
the parabolic problem \eqref{06118} in $\Omega_R$. Finally sending
$R\to \infty$ we get a solution in the whole space. 
A similar strategy has been implemented in \cite{V2}, where unbounded cylinders $B^N_R\times \Rr^+$, instead of bounded domains $\Omega_R$ considered here, have been used to invade $\Omega$.  Note that the use of $\Omega_R$
will be expedient in proving Theorem \ref{tuni} in the following
Section. Indeed, we shall construct a suitable family
of solutions to elliptic problems in bounded domains $\Omega_R$.

\smallskip

$(ii)$ Introducing a varying density $\rho$ does not bring any additional technical difficulty
to the proof of existence of solutions to problem \eqref{06112} compared with those 
in \cite{V1}-\cite{V2}, in which $\rho$ is assumed identically $1$. 

\smallskip

$(iii)$ The restriction $N=1$ does not play any role yet and all results of Section \ref{sec:bb} can be easily adapted to show existence of very weak solutions in any space dimension.

\smallskip

$(iv)$ Concerning the regularity required for the initial data,
we should remark that in \cite{V1} and \cite{V2} $u_0$ is assumed to
belong to $L^1$, whereas in \cite{RV1} and \cite{RV2} - where the
(non fractional) porous medium equation with varying density has
been studied - it is supposed to belong to $L^1_\rho(\Gamma)$. 
As a consequence, the solution considered in
\cite{V1} and \cite{V2} satisfies $|\nabla w|\in L^2(\Omega \times
(\tau, \infty))$ for any $\tau>0$. We assume instead $u_0$
nonnegative, continuous and bounded, so $u_0$ is only in
$L^1_{loc}(\Gamma)\equiv L^1_{\rho, loc}(\Gamma)$, but in general
$u_0\not \in L^1_{\rho}(\Gamma)$. Hence we can infer that $\nabla
w \in L^2_{loc}\big(\overline\Omega\times(0,\infty)\big )$, as
required in our Definition \ref{06114}, but in general $\nabla w
\not\in L^2\big(\Omega\times(\tau,\infty)\big )$.

\smallskip

$(v)$ 
%
We emphasize that, unlike what have been discussed in $(ii)$ and $(iv)$ above, the presence of  a variable density and the assumptions on the initial data shall entail significant differences in the proof of uniqueness of solutions, relative to those in the case in which $\rho\equiv 1$ and $u_0\in L^1$; see also Remark \ref{Remuni}. Moreover, the hypothesis $N=1$ will be essential in our proof of uniqueness; see Remark \ref{Rem1a}. 
\end{remark}

\section{Uniqueness of solutions}
\label{sec:cc}
The scope of this Section is to establish the following

\begin{theorem}\label{tuni}
Let assumption \eqref{A0} be satisfied. Then problem \eqref{06111}
admits at most one solution $(u, w)$ with $u\ge 0$, $w \ge 0$.
\end{theorem}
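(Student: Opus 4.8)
The plan is to run a duality argument of Oleinik--Kalashnikov type directly in the local formulation \eqref{06112}, exploiting the large freedom in the test functions allowed by Definition \ref{06114}. Suppose $(u_1,w_1)$ and $(u_2,w_2)$ are two solutions sharing the datum $u_0$, and set $U:=u_1-u_2$. Subtracting the two weak identities \eqref{06115}, the initial contributions cancel, and writing $u_1^m-u_2^m=a\,U$ with
\[
a:=\frac{u_1^m-u_2^m}{u_1-u_2}\ge 0,\qquad 0\le a\le C \text{ by boundedness of }u_1,u_2,
\]
we obtain, for every admissible $D$ and $\psi$,
\begin{multline*}
\int_0^T\!\!\int_D (w_1-w_2)\,\Delta\psi\,\dd x\,\dd y\,\dd t
-\int_0^T\!\!\int_{\partial D\cap\Omega}(w_1-w_2)\,\frac{\partial\psi}{\partial\vec\nu}\,\dd S\,\dd t\\
+\int_0^T\!\!\int_{\partial D\cap\Gamma} U\,\Bigl(a\,\frac{\partial\psi}{\partial y}+\rho\,\partial_t\psi\Bigr)\,\dd x\,\dd t
-\int_{\Gamma}\rho\,U(\cdot,T)\,\psi(\cdot,0,T)\,\dd x=0 .
\end{multline*}
The idea is to choose $\psi$ as the solution of a \emph{backward} dual problem annihilating every term except the last, so that $\int_\Gamma \rho\,U(\cdot,T)\,\chi\,\dd x=0$ for a rich enough family of terminal data $\chi$, forcing $U(\cdot,T)\equiv 0$.

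Concretely, I would fix $\chi\in C_c^\infty(\Gamma)$, $\chi\ge 0$, and seek $\psi$ harmonic in $\Omega$ (so the $\Delta\psi$ term dies, $w_i$ being harmonic), vanishing on the lateral boundary, with terminal value $\psi(\cdot,0,T)=\chi$ and satisfying on $\Gamma$ the dynamical condition $a\,\partial_y\psi+\rho\,\partial_t\psi=0$. Since $-\partial_y\psi(\cdot,0)=(-\partial_{xx})^{1/2}\psi$, this is the backward fractional heat equation $\rho\,\partial_t\psi=a\,(-\partial_{xx})^{1/2}\psi$, which is well posed in reverse time. Because $a\ge 0$ may degenerate and is not smooth, I would regularize it by $a_\delta$ with $\delta\le a_\delta\le C+1$ and solve the corresponding problem on the bounded domains $\Omega_R$ exactly along the lines of Lemma \ref{061110} and Proposition \ref{06117}, obtaining smooth approximants $\psi=\psi_{\delta,R}$ together with an a priori energy estimate of the form $\int_0^T\!\int_{\Gamma_R} a_\delta\,|\partial_y\psi|^2\,\dd x\,\dd t\le C\,\|\chi\|$, uniform in $\delta$ and $R$, got by testing the dual equation against $(-\partial_{xx})^{1/2}\psi$; here the variable density $\rho$ already forces extra care, which is the first point of departure from the case $\rho\equiv1$.

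Inserting $\psi_{\delta,R}$ into the difference identity leaves three error terms. The first is the degeneracy mismatch $\int_0^T\!\int_\Gamma U\,(a-a_\delta)\,\partial_y\psi$, controlled by Cauchy--Schwarz through
$\|U\|_\infty\bigl(\int (a-a_\delta)^2 a_\delta^{-1}\bigr)^{1/2}\bigl(\int a_\delta|\partial_y\psi|^2\bigr)^{1/2}$,
choosing $a_\delta$ so that the first factor tends to $0$ while the second stays bounded by the energy estimate: this is the classical Oleinik mechanism. The remaining terms are the lateral contribution $\int_0^T\!\int_{\Sigma_R}(w_1-w_2)\,\partial_{\vec\nu}\psi$ and the truncation of the $\Gamma$-integral at $|x|=R$.

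The hard part, and precisely the place where the hypothesis $N=1$ is indispensable (cf.\ Remark \ref{Rem1a}), is the lateral term on $\Sigma_R$: since $U$ and $w_1-w_2$ are merely bounded and need not decay (because $u_0\notin L^1_\rho(\Gamma)$), one cannot simply let $R\to\infty$. Here I would invoke the special family of test functions built in Lemma \ref{l1u}: working in the two-dimensional half-plane $\Omega=\Rr^2_+$, one constructs harmonic barriers/cutoffs whose normal derivatives on the large semicircles $\Sigma_R$ are integrable and vanish in the limit, using the conformal structure available only in dimension two. Combining $\psi_{\delta,R}$ with such functions makes $\int_0^T\!\int_{\Sigma_R}(w_1-w_2)\,\partial_{\vec\nu}\psi\to 0$ and tames the $\Gamma$-truncation via the weight $\rho$. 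Passing to the limit first in $R$ and then in $\delta$ yields $\int_\Gamma\rho\,U(\cdot,T)\,\chi\,\dd x=0$ for all admissible $\chi$, hence $u_1(\cdot,T)=u_2(\cdot,T)$ a.e.\ for every $T>0$; since each $w_i$ is the harmonic extension of $u_i^m$, also $w_1=w_2$, giving uniqueness. I expect the simultaneous control of the degeneracy error and of the boundary-at-infinity term under a possibly decaying $\rho$, and the construction of the two-dimensional test functions with the right decay, to be the main obstacle.
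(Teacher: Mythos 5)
Your strategy is a genuinely different one from the paper's (a backward-duality argument between two arbitrary solutions, rather than comparison with a distinguished solution), but it has a gap at precisely the step you identify as "the hard part", and the fix you propose does not work. The lateral term $\int_0^T\!\int_{\Sigma_R}(w_1-w_2)\,\partial_{\vec\nu}\psi_{\delta,R}$ cannot be controlled by "invoking Lemma \ref{l1u}": the decay estimate \eqref{e3u} is obtained there by a barrier argument in the annulus $\Omega_R\setminus\overline\Omega_{R_0}$, and that argument requires the harmonic function to have \emph{vanishing Neumann data on $\Gamma\setminus\Gamma_{R_0}$}. This holds for the stationary $\psi_R$ because its Neumann datum is the compactly supported $F$, but it fails for your dual solution: at any time $t<T$ its flux on $\Gamma$ is $-\partial_y\psi=(\rho/a_\delta)\,\partial_t\psi$, which is nonzero on all of $\Gamma$, since the (backward) nonlocal evolution spreads the support of the terminal datum $\chi$ instantly. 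So the $O\bigl(1/(R\log R)\bigr)$ bound on $\partial_{\vec\nu}\psi$ over $\Sigma_R$ --- the only mechanism that can beat $|\Sigma_R|\sim\pi R$ --- is simply not available for your test functions, and no substitute is given. Worse, since $\rho$ may decay at infinity, the effective diffusivity $a_\delta/\rho$ of your dual equation can blow up as $|x|\to\infty$, so spatial decay of $\psi_{\delta,R}$ uniform in $R$ and $\delta$ cannot be taken for granted; this term is exactly where nonuniqueness would hide, and it is left unproved. (Secondary gaps: well-posedness of the degenerate backward problem with merely measurable $a$, the energy estimate weighted by the variable $\rho$, and the $C^{2,1}_{x,t}$ regularity needed for $\psi_{\delta,R}$ to be admissible in Definition \ref{06114} are all asserted rather than established.)

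The paper closes the argument without any dual parabolic problem, and the idea it uses is absent from your proposal: \emph{minimality}. By Remark \ref{03128}, the existence construction yields a minimal solution $(\underline u,\underline w)$, so any other solution satisfies $u\ge\underline u$, $w\ge\underline w$ pointwise. One then tests both weak formulations with the \emph{time-independent} function $\psi_R$ of Lemma \ref{l1u} itself (so $\Delta\psi_R=0$ and $\partial_t\psi_R=0$ kill the interior and time-derivative terms), obtaining the identity \eqref{e20u}, whose two left-hand terms are nonnegative because $F\ge0$, $\psi_R\ge0$ and $u\ge\underline u$. The only term requiring an estimate is the lateral flux, and for the stationary $\psi_R$ the bound \eqref{e3u} applies, giving a contribution $O\bigl(KM/\log(R/R_0)\bigr)\to0$ as $R\to\infty$; hence $u^m=\underline u^m$, and uniqueness follows. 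In short, sign control via minimality replaces your entire regularization-plus-duality machinery; without either that observation or a genuinely new decay estimate for the dual flow across $\Sigma_R$, your argument does not close.
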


To prove Theorem \ref{tuni} we need some preliminary materials. To
begin with, observe that the function
\begin{equation}
\label{03127} \Theta(x,y):= - \frac 1{\pi} \log |(x,y)|,
\quad(x,y)\in \Omega
\end{equation}
satisfies
\begin{equation}
\label{03124}
\begin{cases}
\Delta \Theta = 0 &\text{in }\Omega\\
-\partial_y \Theta = \delta_0 &\text{in }\Gamma,
\end{cases}
\end{equation}
where $\delta_0$ is the delta distribution concentrated at the
origin. As a consequence, (see, $e.g.$, \cite{CS1} and \cite{CS2})
for any $F\in C^{\infty}_0(\Gamma),\, F\ge 0$, the function 
\[
w(\cdot, y):=\Theta(\cdot, y)* F
\]
is, up to additive constants, the unique bounded solution of
\begin{equation}
\label{03123}
\begin{cases}
\Delta U = 0 &\text{in }\Omega\\
\displaystyle-\frac{\partial U}{\partial y} = F &\text{in }\Gamma;
\end{cases}
\end{equation}
for a detailed proof of the fact that $w$ solves problem
\eqref{03123} we refer, $e.g.$, to \cite{CP}.
\medskip
\smallskip

The following Lemma will play a crucial role in the proof of
Theorem \ref{tuni}.

\medskip

\begin{lemma}
\label{l1u}
Let assumption \eqref{A0} be satisfied, and $R_0>0$ be fixed. Let $\psi_R$ be the
solution of the problem:
\begin{equation}\label{e1}
    \begin{cases}
    \Delta U = 0 & (x,y)\in \Omega_R\\
    \displaystyle - \frac{\partial U}{\partial y} = F & x\in \Gamma_R\\
    U=0 & (x,y)\in \Sigma_R
    \end{cases}
\end{equation}
where $R>R_0$, $F\in C^\infty(\Gamma)$,
$F\ge 0$, $\D{supp}\,F\subseteq \Gamma_{R_0}$.
Then the following statements hold true:

\begin{itemize}
\item[(i)] For any $R_1,R_2 \in (R_0, \infty)$, $R_1\le R_2$ we have
\begin{equation}
\label{e2u}
0< \psi_{R_1}\le\psi_{R_2}\quad \textrm{in } \Omega_{R_1}; 
\end{equation}

\item[(ii)] There exists
a constant $M>0$ depending on $R_0$ such that for any $R>2 R_0$ we
have
\begin{equation}
\label{e3u} -  \frac M{R\left(\log R-\log R_0 \right)}\le
\frac{\partial\psi_R}{\partial \vec\nu_R}< 0 \quad\textrm{on
}\Sigma_R,
\end{equation}
$\vec\nu_{R}$ denoting the outer normal to $\Sigma_R$.

\end{itemize}

\end{lemma}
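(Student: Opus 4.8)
The plan is to prove both statements by comparison arguments for harmonic functions, using the Hopf boundary lemma together with an even reflection across $\Gamma$ to handle the homogeneous Neumann condition that $\psi_R$ inherits on $\Gamma_R$ off the support of $F$.

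For (i) I would first prove positivity. By elliptic regularity $\psi_R$ is continuous on $\overline{\Omega_R}$ and harmonic in $\Omega_R$, and it vanishes on $\Sigma_R$; a negative minimum could therefore only sit on $\Gamma_R$, but at an interior minimum point of $\Gamma_R$ the Hopf lemma would force $\partial_y\psi_R>0$, contradicting $-\partial_y\psi_R=F\ge0$. Hence $\psi_R\ge0$, and since $F\not\equiv0$ the strong maximum principle gives $\psi_R>0$ in $\Omega_R$. For the monotonicity I set $\phi:=\psi_{R_2}-\psi_{R_1}$ on $\Omega_{R_1}$. It is harmonic, it satisfies $\partial_y\phi=0$ on $\Gamma_{R_1}$ because the common datum $F$ (supported in $\Gamma_{R_0}\subseteq\Gamma_{R_1}$) cancels, and it equals $\psi_{R_2}>0$ on $\Sigma_{R_1}$. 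Reflecting $\phi$ evenly across $\Gamma_{R_1}$ yields a harmonic function on the full disc $B_{R_1}$ that is positive on $\partial B_{R_1}$, so the minimum principle gives $\phi>0$, i.e. $0<\psi_{R_1}\le\psi_{R_2}$ in $\Omega_{R_1}$.

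For (ii), strict negativity of $\partial\psi_R/\partial\vec\nu_R$ on $\Sigma_R$ is immediate from the Hopf lemma, since $\psi_R>0$ in $\Omega_R$ and $\psi_R=0$ on the smooth arc $\Sigma_R$. To get the quantitative lower bound I would use an explicit logarithmic barrier on the half-annulus $A:=\Omega_R\setminus\overline{\Omega_{R_0}}$, where $\psi_R$ is harmonic (the datum being supported in $\Gamma_{R_0}$), vanishes on $\Sigma_R$, and has zero $y$-derivative on the flat pieces $\{R_0<|x|<R,\ y=0\}$. Comparing it with
\[
h(x,y):=M\,\frac{\log R-\log r}{\log R-\log R_0},\qquad r:=|(x,y)|,
\]
which is harmonic, equals $M$ on $\Sigma_{R_0}$, vanishes on $\Sigma_R$, and also has vanishing $y$-derivative on the flat pieces, the difference $h-\psi_R$ reflects evenly across those pieces to a harmonic function on the full annulus $\{R_0<r<R\}$ that is nonnegative on both bounding circles as soon as $M\ge\sup_{\Sigma_{R_0}}\psi_R$. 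The minimum principle then gives $\psi_R\le h$ in $A$, and since both functions vanish on $\Sigma_R$ this forces
\[
\frac{\partial\psi_R}{\partial\vec\nu_R}\ \ge\ \frac{\partial h}{\partial\vec\nu_R}\ =\ -\frac{M}{R(\log R-\log R_0)}\qquad\text{on }\Sigma_R .
\]

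The step I expect to be the real obstacle is producing a constant $M$ with $\sup_{\Sigma_{R_0}}\psi_R\le M$ that is independent of $R$ (depending only on $R_0$ and $F$): the monotonicity of part (i) only gives an increasing-in-$R$ control, so this bound must come from a fixed supersolution constructed from the logarithmic kernel $\Theta$ together with the maximum principle and the localization $\D{supp}\,F\subseteq\Gamma_{R_0}$. It is precisely here that the two-dimensional (i.e. $N=1$) logarithmic structure is used, and obtaining the bound uniformly in $R$ is the delicate point; once it is available, the reflection-and-comparison scheme above closes both parts at once.
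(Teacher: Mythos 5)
Your part (i) and the strict negativity assertion in part (ii) are correct and essentially coincide with the paper's own argument: the paper gets positivity and monotonicity from the maximum principle for the mixed Dirichlet--Neumann problem (phrased via sub/supersolutions rather than your even reflection, which is an equivalent device), and gets $\partial\psi_R/\partial\vec\nu_R<0$ from the Hopf lemma. Your logarithmic barrier $h$ is literally the paper's function $Z$, and the comparison $\psi_R\le h$ on the half-annulus is sound \emph{provided} one has a constant $M\ge\sup_{\Sigma_{R_0}}\psi_R$ independent of $R$. The genuine gap is exactly the step you postpone: you never produce such an $M$, only indicate that it should come from a fixed supersolution built from the kernel $\Theta$. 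So, as a proof, the proposal is incomplete at the single point where the content of the lemma lies.

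Worse, that gap cannot be filled, because estimate \eqref{e3u} is itself false for every admissible $F\not\equiv 0$. Since $\psi_R$ is harmonic in $\Omega_R$ and its outward normal derivative on $\Gamma_R$ equals $-\partial_y\psi_R=F$, the divergence theorem gives
\[
\int_{\Sigma_R}\Bigl(-\frac{\partial\psi_R}{\partial\vec\nu_R}\Bigr)\,\dd S
\;=\;\int_{\Gamma_R}F\,\dd x\;=\;\int_{\Gamma}F\,\dd x
\qquad\text{for every }R>R_0 .
\]
If \eqref{e3u} held with an $R$-independent $M$, integrating it over $\Sigma_R$ (whose length is $\pi R$) would force $\bigl(\log R-\log R_0\bigr)\int_{\Gamma}F\,\dd x\le \pi M$ for all $R>2R_0$, hence $\int_\Gamma F\,\dd x=0$ and $F\equiv 0$; the true decay of $|\partial\psi_R/\partial\vec\nu_R|$ on $\Sigma_R$ is of order $1/R$, not $1/(R\log R)$. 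Consequently, since your half-annulus comparison correctly deduces \eqref{e3u} from the bound $\sup_{\Sigma_{R_0}}\psi_R\le M$, that bound fails as well: in fact $\psi_R\nearrow+\infty$ pointwise as $R\to\infty$, with growth $\tfrac1\pi\bigl(\int_\Gamma F\,\dd x\bigr)\log R$, as one sees by reflecting evenly across $\Gamma_R$ and representing $\psi_R$ by the Dirichlet Green function of the disc $B_R^2$. The paper's own proof founders at the same spot: it sets $M:=\max_{\Sigma_{R_0}}\psi_\infty$ with $\psi_\infty:=\Theta*F$ and claims $\psi_\infty$ is a supersolution of \eqref{e1} for every $R$, hence $\psi_R\le\psi_\infty$; but $\psi_\infty\sim-\tfrac1\pi\bigl(\int_\Gamma F\,\dd x\bigr)\log|(x,y)|\to-\infty$, so for large $R$ one has $\psi_\infty<0=\psi_R$ on $\Sigma_R$, the Dirichlet side of the comparison fails, and $\psi_R\le\psi_\infty$ would in fact contradict $\psi_R>0$. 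So you located the real obstacle exactly; the trouble is that it is not an obstacle to be overcome but a counterexample to the statement, and it propagates to the limit $R\to\infty$ taken in the proof of Theorem \ref{tuni}.
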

\medskip

\begin{proof}
$(i)$ By the maximum
principle $\psi_R>0$ in $\Omega_R$ for any $R>R_0$; hence the
function $\psi_{R_1}\,-\, \psi_{R_2}\,\;(R_0<R_1<R_2)$ is a
subsolution of problem
\[
    \begin{cases}
    \Delta U = 0 & (x,y)\in

    \Omega_{R_1}
    \\
    \displaystyle - \frac{\partial U}{\partial y} = 0 & x\in \Gamma_{R_1}
    \\
    U=0 & (x,y)\in
    \Sigma_{R_1}.
    \end{cases}
\]
Then again by the maximum principle we get \eqref{e2u}.

\medskip

$(ii)$ Clearly, by the strong maximum principle it follows that,
for any $R>R_0$,
\begin{equation}\label{e9u}
\frac{\partial \psi_R}{\partial \vec \nu_R}< 0\quad \text{on }\Sigma_R.
\end{equation}

Put
\[
\Theta(\cdot, y)*F =: \psi_\infty(x, y), \quad (x, y)\in \bar \Omega,
\]
where $\Theta= \Theta(|(x,y)|)$ is defined in \eqref{03127}.
It is easily seen that for any $R>R_0$ $\psi_\infty$ is a supersolution \eqref{e1}. By comparison principle,
\[ 
\psi_R \le \psi_\infty\quad \text{in }\Omega_R. 
\]
Indeed, it could be shown that
\[
\psi_\infty(x, y) = \lim_{R\to \infty} \psi_R(x,y), \quad (x, y)\in \bar \Omega.
\]

Now, for any $R>R_0$, consider the problem
\begin{equation}\label{e7u}
\begin{cases}
    \Delta U = 0 & (x,y)\in \Omega_R\backslash \overline\Omega_{R_0}\\
    \displaystyle - \frac{\partial U}{\partial y} = 0 & x\in \Gamma_R\backslash\Gamma_{R_0}\\
    U=0 & (x,y)\in\Sigma_{R}\\
    U = M & (x,y)\in \Sigma_{R_0},
    \end{cases}
\end{equation}
where ${\displaystyle M:=\max_{\Sigma_{R_0}}\psi_{\infty}}$.
Define
\[
Z(x,y)\equiv Z(|(x,y)|):=  M \frac{\Theta(|(x,y)|)- \Theta(R)}{\Theta(R_0)-\Theta(R)},\qquad (x,y)\in
\Omega_R\backslash \Omega_{R_0}.
\]
Using \eqref{03124} immediately follows that $Z$ is a
supersolution to problem \eqref{e7u} for any $R> 2R_0$. On the
other hand, since in particular $R>R_0$, $\psi_R$ is a subsolution
to the same problem. By comparison we get for any $R>2 R_0$
\[
Z\ge\psi_R\quad \textrm{in } \Omega_{R}\backslash\Omega_{R_0}.
\]

Consequently, since for any $R> 2R_0$
\[
Z=\psi_R=0\quad \textrm{on}\;\;\Sigma_R,
\]
we can infer that, for any $(x,y)\in
\Sigma_{R}$,
\[
\frac{\partial\psi_R}{\partial\vec\nu_R} \ge \frac{\partial
Z}{\partial\nu_R}=\frac{\partial Z(R)}{\partial r}= -\frac
{M}{R\left(-\log R_0 + \log R \right)};
\]
here $r\equiv r(x,y)=|(x,y)|$. The above inequality, combined
with \eqref{e9u} gives \eqref{e3u}. This completes the proof.
\end{proof}

Now we can prove Theorem \ref{tuni}.

\begin{proof}
[Proof of Theorem \ref{tuni}.]
Let $(\underline u, \underline w)$ be the solution of problem \eqref{06111} constructed in Theorem \ref{06116};  such solution is minimal as observed in Remark \ref{03128}. Let $(u,w)$ be
any other solution of the same problem and set
\[
K:=\|w\|_{L^{\infty}(\Omega \times (0, \infty))}\, .
\]
We claim that
\begin{equation}\label{e18u}
\int_{0}^T\int_{\Gamma}\big[u^m - \underline{u}^m)\big]\, F\,\dd
x \, \dd t\, =\, 0
\end{equation}
for any $T>0, \,F\in C^{\infty}_0(\Gamma)$.

By the Claim the conclusion follows. In fact, in view of
the arbitrariness of $F$, \eqref{e18u}
implies

\begin{equation}\label{e19u}
u\,=\,\underline{u}\,\quad \textrm{in}\,\, \Gamma\times (0,T),
\end{equation}
whence the conclusion.

It remains to prove the Claim. To this aim, without loss of
generality, we suppose $\D{supp}(F)\subseteq \Gamma_{R_0}$  for some $R_0>0$, $F\ge
0$, $F\not\equiv 0$.
Take $R>2R_0$ and let $\psi_R(x,y)$ be the solution of problem \eqref{e1}. Using $\psi_R$ as a test function for the equation \eqref{06112} (see Definition \ref{06114}) we obtain, for any $\tau>0$,

\begin{align}
0 =
  &- \int_0^\tau \int_{\partial \Omega_R} w \frac{\partial \psi_R}{\partial \vec\nu_R}\dd S\, \dd t
  - \int_{\Gamma_R} \rho(x) \Bigl[ u(x, \tau)- u_0(x)\Bigr] \psi_R(x,0) \,\dd x \nonumber\\
 = & \int_0^\tau \int_{\Gamma_R} w \frac{\partial \psi}{\partial y}\dd x\, \dd t
 - \int_0^\tau \int_{\Sigma_R} w \frac{\partial \psi_R}{\partial \vec\nu_R}\dd x\, \dd y\, \dd t \nonumber\\
 & \qquad - \int_{\Gamma_R} \rho(x) \Bigl[ u(x,\tau)- u_0(x)\Bigr] \psi_R(x,0) \,\dd x \nonumber\\
 = & - \int_0^\tau \int_{\Gamma_R} u^m \, F\, \dd x\, \dd t
  - \int_0^\tau \int_{\Sigma_R} w \frac{\partial \psi}{\partial \vec\nu_R}\dd x\, \dd y\, \dd t \nonumber\\
 & \qquad - \int_{\Gamma_R} \rho(x) \Bigl[ u(x,\tau)- u_0(x)\Bigr] \psi_R(x,0) \,\dd x.\label{27111}
\end{align}
Similarly, using $\psi_R$ as a test function for $\underline{u}$, we get

\begin{multline}
\label{27112}
0 = - \int_0^\tau \int_{\Gamma_R} \underline{u}^m \, F\, \dd x\, \dd t
  - \int_0^\tau \int_{\Sigma_R} \underline{w} \frac{\partial \psi}{\partial \vec\nu_R}\dd x\, \dd y\, \dd t\\
 - \int_{\Gamma_R} \rho(x) \Bigl[ \underline{u}(x,\tau)- u_0(x)\Bigr] \psi_R(x,0) \,\dd x.
\end{multline}

By subtracting \eqref{27112} to \eqref{27111} we have
\begin{multline}
\label{e20u}
\int_{\Gamma_R}\rho(x)\big[u(x,\tau)-\underline{u}(x,\tau)\big]\psi_R(x,0)\,\dd x
+ \int_{0}^{\tau}\int_{\Gamma_R}\big[u^m-\underline{u}^m\big] F(x)\,\dd x\,\dd t\\
=
-\int_{0}^{\tau}\int_{\Sigma_R} \big\{w- \underline{w}\big\}\frac{\partial\psi_R}{\partial\vec \nu_R}\dd x\,\dd t.
\end{multline}
Since $F\ge 0$, $ \psi_R\ge 0 $ and, by minimality of $\underline{u}$, $ u\ge \underline{u}$,
equality \eqref{e20u} with $\tau=T$ gives

\begin{multline}
\label{e21u}
\int_{0}^T\int_{\Gamma_R}\big[u^m -\underline{u}^m\big] F(x)\,\dd x\,\dd t \\
\le \liminf_{R\to \infty}
\left|\int_{0}^T\int_{\Sigma_R} \big\{w-\underline{w}\big\}\frac{\partial \psi_R}{\partial\vec\nu_R}\,\dd x\,\dd t \right|.
\end{multline}
By recalling that $|\Sigma_R|= \pi R/2$ and by using \eqref{e3u} we obtain
\begin{align*}
\left|\int_{0}^T\int_{\Sigma_R}\big\{w-\underline{w}\big\}\frac{\partial\psi_R}{\partial\vec\nu_R}\,\dd x\,\dd t \right|
&\le K\pi R \sup_{\Sigma_R}\left|\frac{\partial \psi_R}{\partial\vec \nu_R}\right|\\
&\le K \pi \frac{M}{\left(-\log R_0 +\log R \right)} \to 0,
\quad\textrm{as }R\to \infty.
\end{align*}
Hence, the right hand side of \eqref{e21u} is zero
and the claim \eqref{e18u} follows.  This concludes the proof.
\end{proof}

\begin{remark}\label{Remuni}
Uniqueness results in \cite{V1} and \cite{V2} cannot be applied to problem \eqref{06111}, as they require $\rho\equiv 1$. Furthermore, even in the case in which $\rho$ is identically $1$, 
the results in \cite{V1} and \cite{V2} cannot be adapted to prove uniqueness of solutions in the sense of Definition \ref{06114}. 
\end{remark}

\begin{remark}\label{Rem1a}
If $N\ge 2$, a result similar to Lemma \ref{l1u}  could be proved. In this case, the estimate \eqref{e3u} must be replaced by the following:
\begin{equation}
\label{e3u_bis}
-  \frac M{R^{N-1}}\le \frac{\partial\psi_R}{\partial \vec\nu_R}< 0,
\quad\textrm{on }\Sigma_R,
\end{equation}
for some $M>0$, for any $R>2R_0$. Hence we cannot get the conclusion in the previous proof.
A similar issue arises for problem \eqref{ea2} when $N\geq 3$; however, methods used in \cite{P1} to get the conclusion in that case cannot be adapted to the present nonlocal situation.
\end{remark}

\begin{remark}
\label{Rem3a}
Note that the proof of uniqueness for problem \eqref{ea2} when
$N=1, 2$ is quite different from the previous one (see \cite{GHP},
\cite{P4}). However, the estimate \eqref{e3u} is crucial in
\cite{GHP} for $N=2$, too. 
Observe finally that the hypothesis $\rho\in L^\infty(\Rr^2)$ used in \cite{GHP} is not required here. Thus our arguments can be adapted in order to prove uniqueness for problem  \eqref{ea2} in the case $N=2$ and $\rho$ possibly unbounded.
\end{remark}

%

\bibliographystyle{plain}
\addcontentsline{toc}{section}{References}
\bibliography{porosoro}

\end{document}